\numberwithin{equation}{section}
\newtheorem{thm}{Theorem}[subsection]
\newtheorem{prop}[thm]{Proposition}
\newtheorem{lemma}[thm]{Lemma}
\theoremstyle{definition}
\theoremstyle{remark}
\newtheorem{rmk}[thm]{\textbf{Remark}}
\newtheorem{claim}[thm]{\textbf{Claim}}
\theoremstyle{definition}
\newtheorem{dfn}{Definition}[subsection]
\theoremstyle{remark}
\theoremstyle{remark}
\def\imod#1{\allowbreak\mkern10mu({\operator@font mod}\,\,#1)}
\title{\textbf{A Detailed Study of Kirchhoff-type Critical Elliptic Equations and $p$-Sub-Laplacian Operators within the Heisenberg Group $\mathcal{H}_{n}$ Framework}}
\author{SUBHAM DE}
\affil{Department of Mathematics, Indian Institute of Technology, Delhi, India.}
\affil{Email: \textbf{mas227132@iitd.ac.in}\\ Website: \textbf{www.sites.google.com/view/subhamde}}
\date{Dated: \today}
\begin{document}
	
	\maketitle
	\thispagestyle{empty}
	
	\begin{abstract}
		This article presents a comprehensive study of \textit{Kirchhoff-type Critical Elliptic Equations} involving $p$-sub-Laplacian Operators on the \textit{Heisenberg Group} $\mathcal{H}_{n}$. It delves into the mathematical framework of Heisenberg Group, and explores their Spectral Properties. A significant focus is on the existence and multiplicity of solutions under various conditions, leveraging concepts like the \textit{Mountain Pass Theorem}. This work not only contributes to the theoretical understanding of such groups but also has implications in fields like Quantum Mechanics and Geometric Group Theory.\\\\
		\textit{\textbf{Keywords and Phrases: }} Heisenberg Group, sub-Laplacian, Twisted laplacian, Essential Self-Adjointness, Spectrum, Essential Spectra, $p$-sub-Laplacian, Kirchhoff-type Critical Elliptic Equations, Palais-Smale Condition.\\\\
		\texttt{\textbf{2020 MSC: }}\texttt{Primary  35A15, 35A20, 35B38, 35B65, 35J35, 35D30, 35H10, 35J70 }.\\
		\hspace*{50pt}\texttt{Secondary 35-02, 35A01, 35A09 }.
	\end{abstract}

	\pagenumbering{arabic}
	
	\newpage
	\tableofcontents

	\pagestyle{fancy}
	
	\fancyhead[LO,RE]{\markright}
	\lfoot[]{Subham De}
	\rfoot[]{IIT Delhi, India}

	\section{Sub-Laplacian On The Heisenberg Group}
	\subsection{Definition $\&$ Construction}
	Suppose, we consider the identification of $\mathbb{R}^{2}$ with $\mathbb{C}$ via the map,
	\begin{align*}
		\mathbb{R}^{2}\longrightarrow \mathbb{C}\\
		(x,y)\mapsto z=x+iy
	\end{align*}
	Then, we can interpret, $\mathcal{H}_{3}=\mathbb{C}\times \mathbb{R}$, where, $\mathcal{H}_{3}$ is the \textit{Heisenberg Group} defined on $3$ parameters.
	As observed before, $\mathcal{H}_{3}$ is a \textit{non-commutative} and a \textit{Unimodular Lie Group} on which the \textit{Haar Measure} is equal to the usual \textit{Lebesgue Measure} $dzdt$.\par 
	A priori denoting the Lie Algebra associated to $\mathcal{H}_{3}$ as $\mathfrak{h}$, consisting of all left invariant vector fields on the same, we can in fact opt for a basis of $\mathfrak{h}$ as $\{X,Y,T\}$, where,
	\begin{align}\label{16}
		X=\partial_{y_{1}}-2y_{2}\partial_{\tau}\mbox{ , \hspace{20pt}}Y=\partial_{y_{2}}+2y_{1}\partial_{\tau}\mbox{ , \hspace{20pt}}T=4\partial_{\tau}
	\end{align}
	\begin{dfn}\label{dfn16}
		(Sub-Laplacian)  The \textbf{sub-Laplacian} $\mathcal{L}$ on $\mathcal{H}_{3}$ is defined by,
		\begin{align}\label{21}
			\mathcal{L}=-\left(X^{2}+Y^{2}\right)
		\end{align}
	\end{dfn}
	We further introduce the following notations corresponding to the partial differential operators on $\mathbb{C}$ as,
	\begin{align*}
		\frac{\partial}{\partial z}=\frac{\partial}{\partial y_{1}}-i\frac{\partial}{\partial y_{2}}
	\end{align*}
	\begin{align*}
		\frac{\partial}{\partial \overline{z}}=\frac{\partial}{\partial y_{1}}+i\frac{\partial}{\partial y_{2}}
	\end{align*}
	Thus it only suffices to study the vector fields $Z$ and $\overline{Z}$ on $\mathcal{H}_{3}$ given by,
	\begin{align} \label{19}
		Z=X-iY=\frac{\partial}{\partial z}-2i\overline{z}\frac{\partial}{\partial \tau}
	\end{align}
	\begin{align} \label{20}
		\overline{Z}=X+iY=\frac{\partial}{\partial \overline{z}}+2iz\frac{\partial}{\partial \tau}
	\end{align}
	Important to note that, $\overline{Z}$ is also well-known as the \textit{Hans Lewy Operator} \cite{19}, which eventually defies \textit{local solvability} on $\mathbb{R}^{3}$, and,
	\begin{align}\label{22}
		\mathcal{L}=-\frac{1}{2}\left(Z\overline{Z}+\overline{Z}Z\right)
	\end{align}
	We can further compute,
	\begin{align*}
		\mathcal{L}=-\left(\left(\frac{\partial}{\partial y_{1}}-2y_{2}\frac{\partial}{\partial \tau}\right)^{2}+\left(\frac{\partial}{\partial y_{2}}+2y_{1}\frac{\partial}{\partial \tau}\right)^{2}\right)\\
		=-\Delta -4\left(y_{1}^{2}+y_{2}^{2}\right)\frac{\partial^{2}}{\partial \tau^{2}}+4\left(y_{2}\frac{\partial}{\partial y_{1}}-y_{1}\frac{\partial}{\partial y_{2}}\right)\frac{\partial}{\partial \tau}
	\end{align*}
	provided, $\Delta=\frac{\partial^{2}}{\partial y_{1}^{2}}+\frac{\partial^{2}}{\partial y_{2}^{2}}$.\par 
	Subsequently, the symbol $\sigma(\mathcal{L})$ of $\mathcal{L}$ can be derived as follows,
	\begin{align}
		\sigma(\mathcal{L})(y_{1},y_{2},\tau;\xi,\eta,\gamma)=\left(\xi-2y_{2}\gamma\right)^{2}+\left(\eta+2y_{1}\gamma\right)^{2}
	\end{align}
	For every $(y_{1},y_{2},\tau)\mbox{ }, \mbox{ }(\xi,\eta,\gamma)\in \mathcal{H}_{3}$.
	\begin{rmk}
		$\mathcal{L}$ is \textbf{Nowhere Elliptic} on $\mathbb{R}^{3}$.
	\end{rmk}
	A priori from the fact that, $[X,Y]=T$, a theorem by \textit{H$\ddot{o}$rmander} \cite[Theorem~1.1]{18} enables us to conclude that, $\mathcal{L}$ is indeed \textit{Hypoelliptic}.
	\subsection{Twisted Laplacians}
	For $\tau\in \mathbb{R}\setminus \{0\}$, let, $Z_{\tau}$ and $\overline{Z}_{\tau}$ be \textit{partial differential operators given by,
		\begin{align*}
			Z_{\tau}=\frac{\partial}{\partial z}-2\overline{z} \tau \\
			\overline{Z}_{\tau}=\frac{\partial}{\partial \overline{z}}+2z\tau 
	\end{align*}}
	Subsequently, the \textbf{Twisted Laplacian} $L_{\tau}$ is defined as,
	\begin{align} \label{23}
		L_{\tau}=-\frac{1}{2}\left(Z_{\tau}\overline{Z}_{\tau}+\overline{Z}_{\tau}Z_{\tau}\right)
	\end{align}
	To be more explicit, we can write,
	\begin{align*}
		L_{\tau}=-\frac{1}{2}\left(\left(\frac{\partial}{\partial z}-2\overline{z} \tau\right)\left(\frac{\partial}{\partial \overline{z}}+2z\tau\right)+\left(\frac{\partial}{\partial \overline{z}}+2z\tau\right)\left(\frac{\partial}{\partial z}-2\overline{z} \tau\right)\right)
	\end{align*}
	\begin{align} \label{24}
		=-\Delta+4\left(y^{2}_{1}+y^{2}_{2}\right)\tau^{2}+4i\left(y_{1}\frac{\partial}{\partial y_{2}}-y_{2}\frac{\partial}{\partial y_{1}}\right)\tau
	\end{align}
	\begin{rmk}
		The fundamental connection between the \textit{sub-laplacian} and the \textit{twisted laplacian} is given by the following result. ( \textbf{ref.} \cite{20}, \cite{21}, \cite{22} ) \end{rmk}
	\begin{thm}
		Suppose, $u\in \mathcal{S}'(\mathcal{H}_{3})\cap C^{\infty}(\mathcal{H}_{3})$ be such that, $\check{u}(z,\tau)$ is a tempered distribution of $\tau$ on $\mathbb{R}$,  $\forall$ \hspace{10pt} $z\in \mathbb{C}$, where, $\check{u}$ denotes the \textit{Inverse Fourier Transform} of $u$ with respect to time $t$. Then, for almost every $\tau\in \mathbb{R}\setminus \{0\}$,
		\begin{align*}
			\left(\mathcal{L}u\right)^{\tau}=L_{\tau}u^{\tau}
		\end{align*}
		where,
		\begin{align*}
			\left(\mathcal{L}u\right)^{\tau}(z)=\left(\mathcal{L}u\right)^{\check{ }}(z)\mbox{ , \hspace{20pt}}z\in \mathbb{C}
		\end{align*}
		and,
		\begin{align*}
			u^{\tau}(z)=\check{u}(z,\tau)\mbox{ , \hspace{20pt}}z\in \mathbb{C}
		\end{align*}
	\end{thm}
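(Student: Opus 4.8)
The plan is to exploit the fact that the Fourier transform in the central variable $\tau$ intertwines the central vector field $T=4\partial_\tau$ with multiplication, and hence converts the vector fields $Z,\overline Z$ on $\mathcal{H}_3$ into the twisted operators $Z_\tau,\overline Z_\tau$. Concretely, I would first record the elementary Fourier-analytic fact that if $\check u(z,\tau)$ denotes the inverse Fourier transform of $u(z,t)$ with respect to $t$, then $(\partial_t u)^{\check{}}$ corresponds to multiplication by an affine function of $\tau$ (up to the normalization implicit in the definition $T=4\partial_\tau$ and the factors of $2i$ appearing in \eqref{19}--\eqref{20}). Since $u\in\mathcal{S}'(\mathcal{H}_3)\cap C^\infty(\mathcal{H}_3)$ and $\check u(z,\cdot)$ is a tempered distribution in $\tau$ for every $z$, these manipulations are all legitimate in the sense of tempered distributions, and the partial derivatives in $z,\overline z$ commute with the $t$-Fourier transform because they act in the transverse variables.

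Next I would carry out the computation at the level of the first-order operators: applying the inverse $t$-Fourier transform to $Zu = \bigl(\partial_z - 2i\overline z\,\partial_t\bigr)u$ and using that $\partial_t$ becomes multiplication by (a constant multiple of) $\tau$, one obtains $(Zu)^{\check{}}(z,\tau) = \bigl(\partial_z - 2\overline z\,\tau\bigr)\check u(z,\tau) = Z_\tau u^\tau(z)$, and similarly $(\overline Z u)^{\check{}} = \overline Z_\tau u^\tau$. Here one must track the precise constants so that the $-2i\overline z\partial_t$ term matches the $-2\overline z\tau$ term in the definition of $Z_\tau$; this is where the normalizations in \eqref{16}, \eqref{19}, \eqref{20}, and \eqref{23} have to be reconciled, and it is essentially bookkeeping once the Fourier convention is fixed. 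Then, since $\mathcal{L} = -\tfrac12(Z\overline Z + \overline Z Z)$ by \eqref{22} and $L_\tau = -\tfrac12(Z_\tau\overline Z_\tau + \overline Z_\tau Z_\tau)$ by \eqref{23}, I would compose: $(\mathcal{L}u)^{\check{}} = -\tfrac12\bigl((Z\overline Z u)^{\check{}} + (\overline Z Z u)^{\check{}}\bigr)$, and applying the first-order identity twice (the inner application produces $\overline Z_\tau u^\tau$, which is again a smooth function of $z$ and a tempered distribution in $\tau$, so the outer application is justified) yields $-\tfrac12\bigl(Z_\tau\overline Z_\tau + \overline Z_\tau Z_\tau\bigr)u^\tau = L_\tau u^\tau$, which is exactly the claimed identity $(\mathcal{L}u)^\tau = L_\tau u^\tau$.

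The main obstacle I anticipate is not the algebra but the analytic justification that the $t$-Fourier transform genuinely commutes with the composition of two first-order operators in the distributional setting — i.e., that after applying $\overline Z$ (which mixes $\partial_{\overline z}$ with $z\partial_t$) the resulting object still has the regularity needed to apply $Z$ and then the inverse transform, with no boundary or convergence issues. This is handled by the hypothesis $u\in C^\infty(\mathcal{H}_3)$ (so all the pointwise derivatives in $z,\overline z$ exist classically) combined with $u\in\mathcal{S}'(\mathcal{H}_3)$ and the tempered-distribution hypothesis on $\check u(z,\cdot)$ (so the $t$-transform and multiplication by polynomials in $\tau$ are well defined), and this is precisely why the statement is phrased ``for almost every $\tau\in\mathbb{R}\setminus\{0\}$'': the identity holds as an equality of tempered distributions in $\tau$, hence pointwise a.e. I would close by remarking that the same argument, read backwards, recovers $u$ from the family $\{u^\tau\}$ and thus makes the correspondence $\mathcal{L}\leftrightarrow\{L_\tau\}_{\tau\neq 0}$ a genuine direct-integral decomposition, which is the structural fact used in the spectral-theoretic sections that follow.
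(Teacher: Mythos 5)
Your argument is correct, but note that the paper itself offers no proof of this theorem: it is stated as imported from the cited Dasgupta--Wong references, so there is no in-paper argument to compare against. Your computation is precisely the standard one used there. The only genuinely convention-sensitive point is the one you flagged as bookkeeping: with the paper's normalization \eqref{25}, the \emph{inverse} transform in $t$ sends $\partial_t$ to multiplication by $-i\tau$, which is exactly what converts the term $-2i\overline{z}\,\partial_t$ in \eqref{19} into $-2\overline{z}\,\tau$ and hence $(Zu)^{\check{}}=Z_\tau u^\tau$, $(\overline{Z}u)^{\check{}}=\overline{Z}_\tau u^\tau$; with the forward transform you would land on $Z_{-\tau}$ instead, so this deserves one explicit line rather than a remark. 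A small simplification worth mentioning: instead of iterating the first-order intertwining (which forces you to re-verify that $\overline{Z}u$ again satisfies the hypotheses before applying $Z$), you can apply the $t$-transform once to the explicit second-order expression $\mathcal{L}=-\Delta-4(y_1^2+y_2^2)\partial_t^2+4(y_2\partial_{y_1}-y_1\partial_{y_2})\partial_t$ computed in the paper; substituting $\partial_t\mapsto -i\tau$, $\partial_t^2\mapsto -\tau^2$ gives \eqref{24} directly and sidesteps the regularity bookkeeping for the intermediate function. Finally, be a bit careful with your closing claim that equality of tempered distributions in $\tau$ gives the identity ``pointwise a.e.'': that inference requires knowing both sides are locally integrable functions of $\tau$, which is where the hypothesis on $\check{u}(z,\cdot)$ and the ``almost every $\tau\neq 0$'' phrasing actually enter; as written that sentence slightly overstates what distributional equality alone provides.
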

	We recall the definition of \textbf{Fourier Transform} $\hat{f}$ of a function $f\in L^{1}(\mathbb{R})$ as,
	\begin{align} \label{25}
		\hat{f}(\xi):=\frac{1}{\sqrt{2\pi}}\int\limits_{-\infty}^{\infty}e^{-ix.\xi}f(x)dx\mbox{ , \hspace{20pt}}\xi \in \mathbb{R}.
	\end{align}
	Since, our primary intention is to study the \textit{spectral properties }of $L_{\tau}$, we introduce the following.
	\begin{dfn} \label{dfn17}
		The \textbf{Fourier-Wigner Transform} $V_{\tau}(f,g)$ of the functions $f,g\in \mathcal{S}(\mathbb{R})$ is defined by,
		\begin{align*}
			V_{\tau}(f,g)(q,p)=\frac{1}{\sqrt{2\pi}}|\tau|^{1/2}\int\limits_{-\infty}^{\infty}e^{i\tau q.y}f(y-2p)\overline{g(y+2p)}dy \mbox{ , \hspace{20pt}}\forall \hspace{10pt} q,p\in \mathbb{R}
		\end{align*}
	\end{dfn}
	If, $\tau=1$, then, $V_{1}(f,g)=V(f,g)$ , which in fact, defines the \textit{Classical Fourier-Wigner Transform}. ( \textbf{ref.} \cite{25},\cite{26}, \cite{27} ) \par 
	It can be further established that, 
	\begin{align*}
		V_{\tau}(f,g)(q,p)=|\tau|^{1/2}V(f,g)(\tau q,p)
	\end{align*}
	For $\tau \in \mathbb{R}\setminus \{0\}$ and, $k=0,1,2, \cdots $, we define the function $e_{k,\tau}$ on $\mathbb{R}$ by,
	\begin{align*}
		e_{k,\tau}(x)=|\tau|^{1/4}e_{k}(\sqrt{|\tau|}x)\mbox{ , \hspace{20pt}}x\in \mathbb{R}
	\end{align*}
	Where, $e_{k}$ denotes the \textbf{Hermite Function} defined as follows,
	\begin{align} \label{26}
		e_{k}(x):=\frac{1}{\left(2^{k}k!\sqrt{\pi}\right)^{1/2}}e^{-\frac{x^{2}}{2}}H_{k}(x) \mbox{ , \hspace{20pt}} x\in \mathbb{R}.
	\end{align}
	Such that,
	\begin{align} \label{27}
		H_{k}(x):=(-1)^{k}e^{x^{2}}\left(\frac{d}{dx}\right)^{k}\left(e^{-x^{2}}\right)\mbox{ , \hspace{20pt}}x\in \mathbb{R}.
	\end{align}
	Given $j,k=0,1,2,...$, we define the function $e_{j,k,\tau}$ on $\mathbb{C}$ as follows,
	\begin{align} \label{28}
		e_{j,k,\tau}=V_{\tau}(e_{j,\tau}\mbox{ , } e_{k,\tau})
	\end{align}
	Further computation yields,
	\begin{align*}
		e_{j,k,1}=V_{1}(e_{j,1}\mbox{ , } e_{k,1})=V(e_{j}\mbox{ , }e_{k})
	\end{align*}
	Where, $V(e_{j}\mbox{ , }e_{k})$ is the \textit{Classical Hermite Function} on $\mathbb{C}$. [ \textbf{ref.} \cite{25} ]
	\begin{rmk}
		The above result can also be interpreted as an analogue of \cite[Proposition~21.1]{25}.
	\end{rmk}
	\begin{prop} \label{prop3}
		The set, $\{e_{j,k,\tau}\mbox{ }|\mbox{ }j,k=0,1,2,....\}$ is an \textit{orthonormal basis} for $L^{2}(\mathbb{C})$.
	\end{prop}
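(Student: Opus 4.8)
The plan is to reduce the assertion to the classical case $\tau = 1$ — where it is exactly \cite[Proposition~21.1]{25} — by means of the explicit dilation identities already recorded, and then transport an orthonormal basis across a unitary map.

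First I would combine the two scaling relations $e_{k,\tau}(x) = |\tau|^{1/4} e_k(\sqrt{|\tau|}\,x)$ and $V_\tau(f,g)(q,p) = |\tau|^{1/2} V(f,g)(\tau q, p)$. Substituting $f = e_{j,\tau}$, $g = e_{k,\tau}$ into the integral defining $V$, pulling out the $|\tau|^{1/4}$ factors, and changing variables $y \mapsto y/\sqrt{|\tau|}$ in the Fourier--Wigner integral, one obtains after simplification an identity of the form
\begin{align*}
	e_{j,k,\tau}(q,p) = |\tau|^{1/2}\, e_{j,k,1}\bigl(\mathrm{sgn}(\tau)\sqrt{|\tau|}\,q,\ \sqrt{|\tau|}\,p\bigr),
\end{align*}
that is, $e_{j,k,\tau} = D_\tau e_{j,k,1}$, where $(D_\tau F)(q,p) := |\tau|^{1/2} F\bigl(\mathrm{sgn}(\tau)\sqrt{|\tau|}\,q, \sqrt{|\tau|}\,p\bigr)$. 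A direct change of variables — the Jacobian of $(q,p)\mapsto(\mathrm{sgn}(\tau)\sqrt{|\tau|}\,q,\sqrt{|\tau|}\,p)$ is $|\tau|$, which cancels the prefactor $|\tau|$ coming from $|D_\tau F|^2$ — shows that $D_\tau$ is a unitary operator on $L^2(\mathbb{C})$.

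Next I would invoke the classical fact that $\{e_{j,k,1} : j,k = 0,1,2,\dots\} = \{V(e_j,e_k)\}$ is an orthonormal basis for $L^2(\mathbb{C})$. If one prefers to argue from scratch, orthonormality is the Moyal identity $\langle V(f_1,g_1), V(f_2,g_2)\rangle_{L^2(\mathbb{C})} = \langle f_1,f_2\rangle_{L^2(\mathbb{R})}\,\overline{\langle g_1,g_2\rangle_{L^2(\mathbb{R})}}$ specialized to the orthonormal Hermite system $\{e_k\}_{k\ge 0}$ of $L^2(\mathbb{R})$, while completeness follows because the Fourier--Wigner transform (equivalently, the Weyl transform, via Plancherel) is, up to a constant, a surjective isometry from $L^2(\mathbb{R})\,\widehat{\otimes}\,\overline{L^2(\mathbb{R})}\cong L^2(\mathbb{R}^2)$ onto $L^2(\mathbb{C})$, and hence sends the orthonormal basis $\{e_j\otimes\overline{e_k}\}$ of $L^2(\mathbb{R}^2)$ to an orthonormal basis of $L^2(\mathbb{C})$.

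Finally, since a unitary operator carries an orthonormal basis to an orthonormal basis, and $e_{j,k,\tau} = D_\tau e_{j,k,1}$ with $D_\tau$ unitary on $L^2(\mathbb{C})$, the set $\{e_{j,k,\tau} : j,k \ge 0\}$ is an orthonormal basis for $L^2(\mathbb{C})$, as claimed. The only genuinely delicate point in all of this is the bookkeeping of the powers of $|\tau|$ (and the sign of $\tau$) in the dilation identity, so as to confirm that $D_\tau$ is \emph{exactly} unitary; everything else is either a routine change of variables or a citation of the $\tau = 1$ result.
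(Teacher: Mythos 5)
Your proposal is correct, and it follows exactly the route the paper intends: the paper itself states Proposition \ref{prop3} without proof, offering only the remark that it is an analogue of \cite[Proposition~21.1]{25}, i.e.\ the case $\tau=1$. Your dilation identity $e_{j,k,\tau}(q,p)=|\tau|^{1/2}e_{j,k,1}\bigl(\mathrm{sgn}(\tau)\sqrt{|\tau|}\,q,\sqrt{|\tau|}\,p\bigr)$ checks out against the definitions (the substitution $u=\sqrt{|\tau|}\,y$ in the Fourier--Wigner integral produces precisely the stated powers of $|\tau|$ and the sign factor), the operator $D_\tau$ is indeed unitary on $L^{2}(\mathbb{C})$, and combining this with the Moyal identity and the surjectivity of the Fourier--Wigner/Weyl transform for the $\tau=1$ basis $\{V(e_j,e_k)\}$ gives the claim; this is the standard argument and supplies the details the paper leaves to the citation.
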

	Indeed we can learn about the respective \textit{spectral properties} of $L_{\tau} \mbox{  ,  }\tau\in \mathbb{R}\setminus \{0\}$.
	\begin{thm}\label{thm3}
		For $j,k=0,1,2,....$, the following holds true,
		\begin{align*}
			L_{\tau}e_{j,k,\tau}=(2k+1)|\tau|e_{j,k,\tau}
		\end{align*}
	\end{thm}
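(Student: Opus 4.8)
The plan is to reduce the eigenvalue problem for the twisted Laplacian $L_\tau$ to a known eigenvalue identity for the classical (rescaled) Hermite operator, via the Fourier--Wigner transform. First I would record the operator identity connecting $L_\tau$ with the Hermite operator on $\R$: writing the scaled Hermite operator as $H_\tau = -\dfrac{d^2}{dx^2} + \tau^2 x^2$, one has $H_\tau e_{k,\tau} = (2k+1)|\tau|\, e_{k,\tau}$, which follows from the classical fact $\left(-\dfrac{d^2}{dx^2}+x^2\right)e_k = (2k+1)e_k$ together with the scaling $e_{k,\tau}(x) = |\tau|^{1/4} e_k(\sqrt{|\tau|}\,x)$ (a direct change of variables). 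This is the one-dimensional input.

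Next I would exploit the intertwining property of the Fourier--Wigner transform. The key algebraic step is to show that the creation/annihilation vector fields $Z_\tau, \overline{Z}_\tau$ on $\C$, when applied to $V_\tau(f,g)$, correspond to applying the one-dimensional Hermite-type ladder operators $A_\tau = \dfrac{d}{dx} + \tau x$ and $A_\tau^{*} = -\dfrac{d}{dx} + \tau x$ to $f$ or $g$ in the appropriate slot. Concretely, differentiating under the integral sign in Definition~\ref{dfn17} and integrating by parts, one obtains relations of the schematic form $Z_\tau V_\tau(f,g) = c\, V_\tau(A_\tau^{*} f, g)$ and $\overline{Z}_\tau V_\tau(f,g) = c'\, V_\tau(f, A_\tau^{*} g)$ (up to constants and complex conjugation bookkeeping), so that the symmetrized product $L_\tau = -\tfrac12(Z_\tau\overline{Z}_\tau + \overline{Z}_\tau Z_\tau)$ acting on $e_{j,k,\tau} = V_\tau(e_{j,\tau}, e_{k,\tau})$ reduces to $V_\tau$ of the Hermite operator acting on $e_{k,\tau}$ in the second argument. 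Then the one-dimensional identity from the first step gives the eigenvalue $(2k+1)|\tau|$, with the first index $j$ playing no role in the eigenvalue (only labelling the eigenspace multiplicity), exactly as claimed.

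The main obstacle I anticipate is bookkeeping: getting the constants, the signs, and the placement of complex conjugates correct in the intertwining relations, especially because $\dfrac{\partial}{\partial z}$ and $\dfrac{\partial}{\partial \overline z}$ here are normalized without the customary factor of $\tfrac12$, and because the Gaussian factor $e^{i\tau q\cdot y}$ together with the shifts $y \mp 2p$ in $V_\tau$ must be differentiated carefully. A clean way to organize this is to first treat $\tau = 1$ (reducing to the classical Fourier--Wigner transform $V(e_j,e_k)$ and the statement in \cite{25}), verify $L_1 e_{j,k,1} = (2k+1)e_{j,k,1}$ there, and then transfer to general $\tau$ using $V_\tau(f,g)(q,p) = |\tau|^{1/2} V(f,g)(\tau q, p)$ and the scaling of $e_{k,\tau}$, checking that the dilation in the $q$-variable converts $L_1$ into $L_\tau$ with the extra factor $|\tau|$. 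Once the intertwining is pinned down, the proof is essentially immediate from Proposition~\ref{prop3} (which guarantees these are genuine $L^2(\C)$ eigenfunctions, not merely formal ones) and the Hermite eigenvalue relation.
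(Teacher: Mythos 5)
Your proposal is correct and follows essentially the same route as the paper: the paper likewise reduces the statement to the ladder relations of $Z$ and $\overline{Z}$ on the classical Fourier--Wigner transforms $V(e_j,e_k)$ (quoting Theorems 22.1--22.2 of Wong's \emph{Weyl transforms}) and then rescales in $\tau$ to produce the factor $|\tau|$, which is exactly your intertwining-plus-dilation scheme, with the $k=0$ case covered by $Ze_{j,0}=0$. The only difference is that you propose to rederive the intertwining relations by differentiating under the integral, whereas the paper simply cites them.
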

	\begin{proof}
		Proof is similar to the derivation cited in \cite[Theorem~22.2]{25}.\par 
		A priori from the statement of \cite[Theorem~22.1]{25}, it follows that, for $j=0,1,2,...$ and $k=0,1,2,.....$;
		\begin{align*}
			Z\overline{Z}e_{j,k}=i(2k+2)^{1/2}Ze_{j,k-1}=-(2k+2)e_{j,k}
		\end{align*}
		and,
		\begin{align*}
			\overline{Z}Ze_{j,k}=i(2k)^{1/2}Ze_{j,k-1}=-(2k)e_{j,k}
		\end{align*}
		Thus, \begin{align*}
			L_{\tau}e_{j,k}=-\frac{1}{2}\left(Z\overline{Z}+\overline{Z}Z\right)|\tau|e_{j,k,\tau}
		\end{align*}
		Important to observe that, the above identity also holds for $e_{j,0,\tau}$, where, $j=0,1,2,.....$ due to the fact that,
		\begin{align*}
			Ze_{j,0}=0\mbox{ , \hspace{20pt}}\forall \hspace{10pt} j=0,1,2,....
		\end{align*}
	\end{proof}
	\subsection{Essential Self-Adjointness Property}
	Our aim in this section is to study the \textit{Sub-Laplacian} $\mathcal{L}$ as an \textit{unbounded linear operator} from $L^{2}(\mathcal{H}_{3})$ to  $L^{2}(\mathcal{H}_{3})$ with dense domain denoted as, $\mathcal{S}(\mathcal{H}_{3})$.
	\begin{prop}
		$\mathcal{L}$ is an \textit{injective symmetric} operator from $L^{2}(\mathcal{H}_{3})$ to  $L^{2}(\mathcal{H}_{3})$ with dense domain $\mathcal{S}(\mathcal{H}_{3})$. Furthermore, it is strictly positive.
	\end{prop}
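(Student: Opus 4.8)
The plan is to establish the four assertions—dense domain, symmetry, non-negativity, and strict positivity (from which injectivity follows)—in that order, with the only non-routine point being the passage from a null vector of the quadratic form to the zero function, which uses the bracket relation $[X,Y]=T$.

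First, the domain. Since $\mathcal{H}_{3}$ is identified with $\mathbb{R}^{3}$ with Haar measure equal to Lebesgue measure, $\mathcal{S}(\mathcal{H}_{3})=\mathcal{S}(\mathbb{R}^{3})$ is dense in $L^{2}(\mathbb{R}^{3})=L^{2}(\mathcal{H}_{3})$; this is standard and needs no further comment. Next, symmetry. The fields $X=\partial_{y_{1}}-2y_{2}\partial_{\tau}$ and $Y=\partial_{y_{2}}+2y_{1}\partial_{\tau}$ are first-order operators with \emph{real} coefficients whose $\partial_{\tau}$-coefficient does not depend on the variable being differentiated, so integration by parts on $\mathbb{R}^{3}$ (no boundary terms on Schwartz functions) gives $\langle Xu,v\rangle=-\langle u,Xv\rangle$ and $\langle Yu,v\rangle=-\langle u,Yv\rangle$ for all $u,v\in\mathcal{S}(\mathcal{H}_{3})$; that is, $X$ and $Y$ are formally skew-adjoint. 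Consequently $X^{2}$ and $Y^{2}$ are formally self-adjoint on $\mathcal{S}(\mathcal{H}_{3})$, and therefore $\mathcal{L}=-(X^{2}+Y^{2})$ is symmetric. Moreover, applying skew-adjointness twice,
\[
\langle \mathcal{L}u,u\rangle=-\langle X^{2}u,u\rangle-\langle Y^{2}u,u\rangle=\|Xu\|_{L^{2}}^{2}+\|Yu\|_{L^{2}}^{2}\ \ge\ 0,
\]
so $\mathcal{L}$ is non-negative on its domain.

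For strict positivity (understood as $\langle \mathcal{L}u,u\rangle>0$ for every $u\in\mathcal{S}(\mathcal{H}_{3})$ with $u\neq 0$), suppose $u\in\mathcal{S}(\mathcal{H}_{3})$ and $\langle \mathcal{L}u,u\rangle=0$. The displayed identity forces $Xu=Yu=0$. Then $Tu=[X,Y]u=XYu-YXu=0$, and since $T=4\partial_{\tau}$ we get $\partial_{\tau}u=0$; substituting back, $Xu=\partial_{y_{1}}u=0$ and $Yu=\partial_{y_{2}}u=0$. Hence all first-order partials of $u$ vanish, so $u$ is constant, and being square-integrable on $\mathbb{R}^{3}$ it must be identically zero. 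This proves $\langle \mathcal{L}u,u\rangle>0$ for $0\neq u\in\mathcal{S}(\mathcal{H}_{3})$, i.e. $\mathcal{L}$ is strictly positive; in particular, if $\mathcal{L}u=0$ then $\langle \mathcal{L}u,u\rangle=0$, hence $u=0$, so $\mathcal{L}$ is injective.

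The main (indeed only) obstacle is the step inferring $u=0$ from $Xu=Yu=0$: this is precisely where the bracket-generating (step-two stratified) structure $[X,Y]=T$ is essential, exactly the commutator relation already recorded before Hörmander's theorem in the excerpt. One could alternatively first invoke hypoellipticity of $\mathcal{L}$ to promote an $L^{2}$ null solution to a smooth one, but since the prescribed domain is $\mathcal{S}(\mathcal{H}_{3})$ the smoothness is automatic and this detour is unnecessary.
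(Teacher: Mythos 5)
Your proof is correct and follows essentially the same route as the paper, which simply states that the result ``follows from integration by parts'': skew-adjointness of the divergence-free real fields $X,Y$ gives symmetry and $\langle\mathcal{L}u,u\rangle=\|Xu\|_{L^{2}}^{2}+\|Yu\|_{L^{2}}^{2}\geq 0$. Your additional step deducing $u=0$ from $Xu=Yu=0$ via $[X,Y]=T=4\partial_{\tau}$ correctly supplies the strict positivity and injectivity details that the paper leaves implicit.
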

	Proof follows from integration by parts.
	\begin{rmk}
		The above proposition implies that, $\mathcal{L}$ is closed. Suppose we denote its \textit{closure} by $\mathcal{L}_{0}$. Hence, $\mathcal{L}_{0}$ is closed, symmetric and positive operator from $L^{2}(\mathcal{H}_{3})$ to  $L^{2}(\mathcal{H}_{3})$.
	\end{rmk}
	In fact, $\mathcal{L}$ is \textbf{Essentially Self-Adjoint} ( \textbf{ref.} \cite[Section~4, pp.~1603]{23}) in the following sense that, it has a unique \textit{self-adjoint extension}, which, subsequently equals to $\mathcal{L}_{0}$.
	Further details on \textit{essential self-adjointness} can be found in \cite[Theorem~X.23]{24}.
	\begin{rmk}
		The results and derivations in this article are also valid for the \textit{Sub-Laplacian} on the $n$-dimensional \textit{Heisenberg Group} $\mathcal{H}_{n}$, $n>1$, having an underlying space as $\mathbb{C}^{n}\times \mathbb{R}$, although, we have only explored the case for $n=1$, which is $\mathcal{H}_{3}$ for the sake of lucidity.
	\end{rmk}
	\section{The Spectrum of the sub-Laplacian}
	A priori given  a \textit{closed linear operator} $\mathcal{T}$ from a complex Banach Space $X$ with dense domain $\mathcal{D}(\mathcal{T})$, we provide the following definitions,
	\begin{dfn}\label{dfn18}
		(Spectrum)  The \textbf{Resolvent Set} $\rho(\mathcal{T})$ of $\mathcal{T}$ is defined as follows,
		\begin{align*}
			\rho(\mathcal{T})\mbox{ }:=\mbox{ }\left\{\lambda\in \mathbb{C}\mbox{ }|\mbox{ }\mathcal{T}-\lambda I\mbox{ }:\mbox{ }\mathcal{D}(\mathcal{T})\longrightarrow X \mbox{  is bijective}\right\}
		\end{align*}
		Where, $I$ denotes the \textbf{identity operator} on $X$. \\
		The \textbf{Spectrum}, denoted by $\Sigma(\mathcal{T})$ is defined to be the complement of $\rho(\mathcal{T})$ in $\mathbb{C}$.
	\end{dfn}
	\begin{dfn}\label{dfn19}
		(Point Spectrum)  The \textbf{point spectrum} \cite{28} of $\mathcal{T}$, denoted by $\Sigma_{p}(\mathcal{T})$ is defined as,
		\begin{align*}
			\Sigma_{p}(\mathcal{T})\mbox{ }:=\mbox{ }\left\{\lambda\in \mathbb{C}\mbox{ }|\mbox{ }\mathcal{T}-\lambda I\mbox{ }:\mbox{ }\mathcal{D}(\mathcal{T})\longrightarrow X \mbox{  is not injective}\right\}
		\end{align*}
	\end{dfn}
	\begin{dfn}\label{dfn20}
		(Continuous Spectrum)  The \textbf{Continuous Spectrum} of $\mathcal{T}$, denoted by $\Sigma_{c}(\mathcal{T})$, is defined as,
		\begin{align*}
			\Sigma_{c}(\mathcal{T})\mbox{ }:=\mbox{ }\left\{\lambda\in \mathbb{C}\mbox{ }|\mbox{ }Range(\mathcal{T}-\lambda I)\mbox{ } \mbox{  is dense in }X\mbox{ ,  }(\mathcal{T}-\lambda I)^{-1}\mbox{ exists, but is unbounded}\right\}
		\end{align*}
	\end{dfn}
	\begin{dfn}\label{dfn21}
		(Residual Spectrum)  The \textbf{Residual Spectrum} of $\mathcal{T}$, denoted by $\Sigma_{r}(\mathcal{T})$, is defined as,
		\begin{align*}
			\Sigma_{r}(\mathcal{T})\mbox{ }:=\mbox{ }\left\{\lambda\in \mathbb{C}\mbox{ }|\mbox{ }Range(\mathcal{T}-\lambda I)\mbox{ } \mbox{  is not dense in }X\mbox{ ,  }(\mathcal{T}-\lambda I)^{-1}\mbox{ exists and is bounded}\right\}
		\end{align*}
	\end{dfn}
	We can indeed deduce that, $\Sigma_{p}(\mathcal{T})$, $\Sigma_{c}(\mathcal{T})$ and $\Sigma_{r}(\mathcal{T})$ are \textit{mutually disjoint}. Furthermore,
	\begin{align*}
		\Sigma(\mathcal{T})=\Sigma_{p}(\mathcal{T})+ \Sigma_{c}(\mathcal{T})+ \Sigma_{r}(\mathcal{T})
	\end{align*}
	\begin{prop}\label{prop4}
		A priori given a complex and separable Hilbert Space $X$, if $\mathcal{T}$ is a self-adjoint operator , then,
		\begin{align*}
			\Sigma_{r}(\mathcal{T})=\phi
		\end{align*}
	\end{prop}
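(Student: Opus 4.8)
The plan is to argue by contradiction, turning the failure of density of the range into an eigenvalue equation via the adjoint, and then using the reality of eigenvalues of a symmetric operator. So suppose, contrary to the claim, that some $\lambda \in \Sigma_{r}(\mathcal{T})$. By Definition~\ref{dfn21} the range of $\mathcal{T}-\lambda I$ is not dense in $X$; its closure is therefore a proper closed subspace, and since $X$ is a Hilbert space there exists a nonzero $v \in X$ orthogonal to that range, i.e.
$\langle (\mathcal{T}-\lambda I)u, v\rangle = 0$ for every $u \in \mathcal{D}(\mathcal{T})$.

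Next I would convert this orthogonality into an eigenvalue relation for $\overline{\lambda}$. Rearranging, $\langle \mathcal{T}u, v\rangle = \lambda\langle u, v\rangle = \langle u, \overline{\lambda}v\rangle$ for all $u$ in the dense domain $\mathcal{D}(\mathcal{T})$; by the very definition of the adjoint this means $v \in \mathcal{D}(\mathcal{T}^{*})$ and $\mathcal{T}^{*}v = \overline{\lambda}v$. Here is where self-adjointness enters: since $\mathcal{T}^{*} = \mathcal{T}$ (with equal domains), we get $v \in \mathcal{D}(\mathcal{T})$ and $\mathcal{T}v = \overline{\lambda}v$ with $v \neq 0$, so $\overline{\lambda} \in \Sigma_{p}(\mathcal{T})$. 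Finally, I would record the one-line fact that eigenvalues of a symmetric operator are real: if $\mathcal{T}v = \mu v$ with $v\neq 0$ then $\mu\|v\|^{2} = \langle \mathcal{T}v, v\rangle = \langle v, \mathcal{T}v\rangle = \overline{\mu}\|v\|^{2}$, hence $\mu \in \mathbb{R}$. Applying this with $\mu = \overline{\lambda}$ gives $\overline{\lambda} = \lambda$, so actually $\lambda \in \Sigma_{p}(\mathcal{T})$. But $\Sigma_{p}(\mathcal{T})$ and $\Sigma_{r}(\mathcal{T})$ were noted above to be mutually disjoint, contradicting $\lambda \in \Sigma_{r}(\mathcal{T})$; therefore $\Sigma_{r}(\mathcal{T}) = \emptyset$.

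The argument is short, and the only genuinely delicate point — the part I would be most careful about — is the domain bookkeeping for the unbounded operator: the orthogonality relation must be invoked for \emph{all} $u$ in the dense domain $\mathcal{D}(\mathcal{T})$ so that it legitimately places $v$ in $\mathcal{D}(\mathcal{T}^{*})$, and one must use that self-adjointness gives not merely $\mathcal{T}\subseteq \mathcal{T}^{*}$ but the equality $\mathcal{D}(\mathcal{T}) = \mathcal{D}(\mathcal{T}^{*})$, so that $v$ is a bona fide eigenvector of $\mathcal{T}$ itself. If $\mathcal{T}$ were only assumed symmetric this passage would break down, which is precisely why self-adjointness is needed in the hypothesis (and why, e.g., the shift-type phenomena that produce residual spectrum cannot occur here). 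Everything else is routine Hilbert-space manipulation.
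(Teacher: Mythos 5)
Your proof is correct, and it is the standard argument: orthogonality to the non-dense range identifies a nonzero $v$ with $\mathcal{T}^{*}v=\overline{\lambda}v$, self-adjointness (with equality of domains) turns $v$ into a genuine eigenvector, realness of eigenvalues of symmetric operators gives $\overline{\lambda}=\lambda$, and disjointness of $\Sigma_{p}(\mathcal{T})$ and $\Sigma_{r}(\mathcal{T})$ yields the contradiction. The paper states this proposition without supplying a proof, so there is nothing to compare against; your argument fills that gap, and your emphasis on the domain bookkeeping ($v\in\mathcal{D}(\mathcal{T}^{*})=\mathcal{D}(\mathcal{T})$, which fails for merely symmetric operators) is exactly the right delicate point. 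Two incidental remarks: with the paper's Definition~\ref{dfn21} you could finish even more directly, since $\lambda\in\Sigma_{p}(\mathcal{T})$ means $\mathcal{T}-\lambda I$ is not injective, contradicting the existence of $(\mathcal{T}-\lambda I)^{-1}$ demanded by the definition of $\Sigma_{r}(\mathcal{T})$; and the separability hypothesis is never used, as your proof shows.
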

	With the above notations and concepts, we thus delineate a more precise illustration of the Spectrum of the \textit{sub-Laplacian} on the \textit{Heisenberg Group}.
	\begin{thm}\label{thm4}
		We have,
		\begin{align*}
			\Sigma(\mathcal{L}_{0})=\Sigma_{c}(\mathcal{L}_{0})=\left[0,\infty\right)
		\end{align*}
	\end{thm}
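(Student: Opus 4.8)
The plan is to fiber $\mathcal{L}_0$ over the twisted Laplacians $L_\tau$ by means of the Fourier transform in the central variable $t$, and then to read off the spectrum from the explicit eigenvalues $(2k+1)|\tau|$ of $L_\tau$ supplied by Theorem \ref{thm3}. First I would dispose of two soft observations. Because $\mathcal{L}$ is essentially self-adjoint, $\mathcal{L}_0$ is self-adjoint, so Proposition \ref{prop4} gives $\Sigma_r(\mathcal{L}_0)=\phi$; and because $\mathcal{L}$ is strictly positive, so is its closure $\mathcal{L}_0$, whence $\Sigma(\mathcal{L}_0)\subseteq[0,\infty)$. In view of the decomposition $\Sigma=\Sigma_p\sqcup\Sigma_c\sqcup\Sigma_r$, it then suffices to prove (i) $[0,\infty)\subseteq\Sigma(\mathcal{L}_0)$ and (ii) $\Sigma_p(\mathcal{L}_0)=\phi$.

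For (i), I would use the partial Fourier transform $\mathcal{F}_t$ in $t$ to realise a unitary equivalence between $L^2(\mathcal{H}_3)=L^2(\mathbb{C}\times\mathbb{R})$ and the direct integral $\int_{\mathbb{R}}^{\oplus}L^2(\mathbb{C})\,d\tau$; by the theorem relating $(\mathcal{L}u)^\tau$ with $L_\tau u^\tau$, this equivalence carries $\mathcal{L}_0$ onto the diagonalisable operator $\int_{\mathbb{R}}^{\oplus}L_\tau\,d\tau$, each fiber $L_\tau$ ($\tau\neq0$) being self-adjoint on $L^2(\mathbb{C})$. By Theorem \ref{thm3} together with the completeness of $\{e_{j,k,\tau}\}$ from Proposition \ref{prop3}, $\Sigma(L_\tau)=\{(2k+1)|\tau|:k=0,1,2,\dots\}$. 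The standard description of the spectrum of a direct integral of self-adjoint operators as the closure of the essential union of the fiber spectra then yields $\Sigma(\mathcal{L}_0)=\overline{\bigcup_{\tau\neq0}\{(2k+1)|\tau|:k\geq0\}}$. Already the branch $k=0$ sweeps out $\{|\tau|:\tau\neq0\}=(0,\infty)$, and since every element of the union lies in $[0,\infty)$, passing to closures gives $\Sigma(\mathcal{L}_0)=[0,\infty)$.

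For (ii), suppose $\mathcal{L}_0u=\lambda u$ with $u\in L^2(\mathcal{H}_3)$, $u\neq0$. Passing to fibers, $L_\tau u^\tau=\lambda u^\tau$ for almost every $\tau\in\mathbb{R}\setminus\{0\}$. For a fixed $\tau$, the number $\lambda$ belongs to $\Sigma_p(L_\tau)$ only when $\lambda=(2k+1)|\tau|$ for some integer $k\geq0$, i.e. only when $|\tau|\in\{\lambda/(2k+1):k\geq0\}$, a countable set of Lebesgue measure zero; for every other $\tau$ we are forced to have $u^\tau=0$ in $L^2(\mathbb{C})$. Hence $u^\tau=0$ for a.e. $\tau$, so $u=0$ by Plancherel, a contradiction. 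Therefore $\Sigma_p(\mathcal{L}_0)=\phi$, and combining this with $\Sigma_r(\mathcal{L}_0)=\phi$ and (i) we conclude $\Sigma(\mathcal{L}_0)=\Sigma_c(\mathcal{L}_0)=[0,\infty)$.

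The step I expect to demand the most care is the rigorous set-up of the direct-integral decomposition: one has to verify that $\mathcal{F}_t$ intertwines the \emph{self-adjoint} operator $\mathcal{L}_0$ — not merely the symmetric operator $\mathcal{L}$ on the core $\mathcal{S}(\mathcal{H}_3)$ — with $\int^{\oplus}L_\tau\,d\tau$ on its natural domain, which amounts to checking that the field $\tau\mapsto L_\tau$ is measurable and suitably behaved and that closures are respected under $\mathcal{F}_t$. Once this measurable-field machinery is in place, the identity $\Sigma\bigl(\int^{\oplus}L_\tau\,d\tau\bigr)=\overline{\bigcup_\tau\Sigma(L_\tau)}$ is a standard theorem, and the remaining computations (positivity, the $k=0$ sweep, and the null-set argument against eigenvalues) are routine.
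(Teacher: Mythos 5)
Your proposal is correct, and its treatment of the point spectrum (the measure-zero argument forcing $u^{\tau}=0$ a.e., hence $u=0$) is essentially the paper's own; but your proof of the inclusion $\left[0,\infty\right)\subseteq\Sigma(\mathcal{L}_{0})$ goes by a genuinely different route. The paper argues by contradiction without any direct-integral machinery: if $(\mathcal{L}_{0}-\lambda_{0}I)$ were surjective, then $\lambda_{0}\in\rho(\mathcal{L}_{0})$ and an open interval $I_{\lambda_{0}}\subset\rho(\mathcal{L}_{0})$ exists; feeding in the concrete right-hand side $f(x,y,t)=h(x,y)e^{-t^{2}/2}$ and taking the partial inverse Fourier transform, one gets surjectivity of $(L_{\tau}-\lambda I)$ for a.e.\ $\tau$ and each rational $\lambda\in I_{\lambda_{0}}$, then upgrades to bijectivity for all $\lambda\in I_{\lambda_{0}}$ via openness of $\rho(L_{\tau})$, and finally derives a contradiction by choosing $\tau$ small enough that some eigenvalue $(2k+1)|\tau|$ of $L_{\tau}$ falls inside $I_{\lambda_{0}}$. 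Your route instead sets up the unitary equivalence $L^{2}(\mathcal{H}_{3})\simeq\int_{\mathbb{R}}^{\oplus}L^{2}(\mathbb{C})\,d\tau$ carrying $\mathcal{L}_{0}$ to $\int^{\oplus}L_{\tau}\,d\tau$ and quotes the standard description of the spectrum of a direct integral. What your approach buys: once the measurable-field verification you flag is done, the whole computation is transparent, and you also make explicit two points the paper leaves implicit --- the containment $\Sigma(\mathcal{L}_{0})\subseteq\left[0,\infty\right)$ from positivity, and the exclusion of $\lambda=0$ as an eigenvalue, which the paper only handles by citing \cite{22} while your null-set argument covers it uniformly. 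What the paper's approach buys: it needs nothing beyond openness of the resolvent set and the fiber identity $(\mathcal{L}u)^{\tau}=L_{\tau}u^{\tau}$, so it avoids the direct-integral formalism entirely (at the price of a more ad hoc argument). One small caution on your side: the spectrum of a direct integral is the \emph{essentially} closed union of the fiber spectra, not literally $\overline{\bigcup_{\tau\neq0}\Sigma(L_{\tau})}$; here the discrepancy is harmless, since every point of $(0,\infty)$ is approximated by eigenvalues $(2k+1)|\tau'|$ coming from a positive-measure set of nearby $\tau'$, so the $k=0$ branch really does force $\left[0,\infty\right)\subseteq\Sigma(\mathcal{L}_{0})$, but you should state the criterion in its positive-measure form when writing this up.
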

	\begin{proof}
		We first intend to show that, no eigenvalue of $\mathcal{L}_{0}$ lies in the interval $\left[0,\infty \right)$. We know for a fact that, $0$ is not an eigenvalue of $\mathcal{L}_{0}$ ( Proof mentioned in \cite{22}). Suppose, $\lambda$ be a positive number such that, $\exists$ a function $u\in L^{2}(\mathcal{H}_{3})$ satisfying,
		\begin{align*}
			\mathcal{L}_{0}u=\lambda u
		\end{align*}
		Consequently,
		\begin{align*}
			L_{\tau}u^{\tau}=\lambda u^{\tau}
		\end{align*}
		Although, the above relation implies that, $u^{\tau}=0$, \hspace{10pt}$\forall$ $\tau\in \mathbb{R}\setminus \{0\}$ and,
		\begin{align}
			|\tau|\neq \frac{\lambda}{(2k+1)}\mbox{ , }\hspace{20pt}k=0,1,2,\cdots 
		\end{align}
		This helps us conclude that, $u=0$, a contradiction. Moreover, $\mathcal{L}_{0}$ being \textit{self-adjoint}, it implies,
		\begin{align*}
			\Sigma(\mathcal{L}_{0})=\Sigma_{c}(\mathcal{L}_{0})
		\end{align*}
		Thus, it only suffices to establish that, $(\mathcal{L}_{0}-\lambda I)$ is not \textit{surjective}  $\forall$ \hspace{10pt}$\lambda\in \left[0,\infty\right)$. \par 
		Assuming $(\mathcal{L}_{0}-\lambda I)$ to be surjective for some $\lambda_{0}\in \left[0,\infty\right)$, we can infer that, $\lambda_{0}\in \rho(\mathcal{L}_{0})$. Hence, $\exists$ an open interval $I_{\lambda_{0}}\subset \rho(\mathcal{L}_{0})$ containing $\lambda_{0}$.\par 
		Define $f$ on $\mathcal{H}$ as,
		\begin{align*}
			f(x,y,t)=h(x,y)e^{-\frac{t^{2}}{2}}\mbox{ , }\hspace{10pt} x,y,t\in \mathbb{R}
		\end{align*}
		Where, $h\in L^{2}(\mathbb{R}^{2})$.\par 
		Therefore, for every $\lambda \in I_{\lambda_{0}}$, $\exists$ a function $u_{\lambda}\in L^{2}(\mathcal{H}_{3})$, such that,
		\begin{align*}
			(\mathcal{L}_{0}-\lambda I)u_{\lambda}=f
		\end{align*}
		Computing the \textit{Inverse Transform} with respect to $t$ helps us conclude,
		\begin{align}
			(L_{\tau}-\lambda I)u^{\tau}_{\lambda}=he^{-\frac{\tau^{2}}{2}}\mbox{ , }\hspace{20pt} \mbox{ for almost every }\tau\in \mathbb{R}\setminus\{0\}
		\end{align}
		As a consequence, $(L_{\tau}-\lambda I)$ is \textit{surjective} $\forall$\hspace{10pt}$\tau\in S_{\lambda}$ for which the \textit{lebesgue measure}, 
		\begin{align*}
			m(\mathbb{R}\setminus S_{\lambda})=0
		\end{align*}
		Consider, $\tau\in \bigcap\limits_{r\in I_{\lambda_{0}}\cap \mathbb{Q}}S_{r}$ , $\mathbb{Q}$ being the set of all \textit{rationals}. Hence, $(L_{\tau}-\lambda I)$ is surjective, and subsequently \textit{injective} for every $\lambda\in I_{\lambda_{0}}\cap \mathbb{Q}$.\par 
		Thus, $L_{\tau}-\lambda I$ is \textit{bijective}, \hspace{5pt} $\forall$ $\lambda\in I_{\lambda_{0}}$. ( Use the fact that, the \textit{Resolvent Set} of $L_{\tau}$ is \textit{open} ) Furthermore, we can also observe that, $(L_{\tau}-\lambda I)$ is \textit{injective} iff, 
		\begin{align*}
			\lambda\neq (2k+1)|\tau|\mbox{ , }\hspace{20pt}k=0,1,2,\cdots 
		\end{align*}
		A contradiction to the fact that, if we assume $\tau\in \bigcap\limits_{r\in I_{\lambda_{0}}}S_{r}$ to be sufficiently small, such that, $(2k+1)|\tau|\in I_{\lambda_{0}}$ for some $k=0,1,2,\cdots$. Hence, the proof is done.
	\end{proof}
	\begin{rmk}\label{rmk5}
		As an application to \textit{Theorem }\eqref{thm4}, in the next section, we shall introoduce various \textit{Essential Spectra} which'll be extremely helpful to us.
	\end{rmk}
	\section{Essential Spectra of sub-Laplacian}
	Given a \textit{closed} linear operator $\mathcal{T}$ densly defined on a complex Banach Space $X$. 
	\begin{dfn}\label{dfn22}
		The \textbf{Essential Spectrum} of $\mathcal{T}$, denoted as $\Sigma_{DS}(\mathcal{T})$  [\textit{Dunford} and Schwartz \cite{29} ] is defined as follows,
		\begin{align*}
			\Sigma_{DS}(\mathcal{T})\mbox{ }:=\mbox{ }\left\{\lambda\in \mathbb{C}\mbox{ }|\mbox{ }Range(\mathcal{T}-\lambda I)\mbox{ } \mbox{  is not closed in }X\right\}
		\end{align*}
	\end{dfn}
	Let us recall the following concept from \textit{Functional Analysis}.
	\begin{dfn}
		(Fredholm Operator)  Given any two Banach Spaces $X$ and $Y$, and a bounded linear operator $\mathcal{T}:X\longrightarrow Y$, we define $\mathcal{T}$ to be \textbf{Fredholm} if, the following conditions hold true:
		\begin{enumerate}
			\item $ker(\mathcal{T})$ is of finite dimension.
			\item $Range(\mathcal{T})$ is closed.
			\item $Coker(\mathcal{T})$ is of finite dimension.
		\end{enumerate}
		If $\mathcal{T}$ is \textit{Fredholm}, then, \textbf{Index of }$\mathcal{T}$ is defined to be equal to $\{dim(ker(\mathcal{T}))-dim(Coker(\mathcal{T}))\}$.
	\end{dfn}
	Denote $\Phi_{W}(\mathcal{T})$ to be the set of all $\lambda \in \mathbb{C}$, such that, $\mathcal{T}-\lambda I$ is a \textit{Fredholm Operator}.
	Furthermore, suppose, $\Phi_{S}(\mathcal{T})$ be the set of all complex numbers $\lambda$ satisfying, $\mathcal{T}-\lambda I$ is \textit{Fredholm} with index $0$. \par 
	Then, the essential spectrums $\Sigma_{W}(\mathcal{T})$ [Wolf \cite{30} \cite{31}] and, $\Sigma_{S}(\mathcal{T})$ [Schechter \cite{32}] of $\mathcal{T}$ having the following definitions,
	\begin{align*}
		\Sigma_{W}(\mathcal{T})=\mathbb{C}\setminus \Phi_{W}(\mathcal{T})
	\end{align*}
	and,
	\begin{align*}
		\Sigma_{S}(\mathcal{T})=\mathbb{C}\setminus \Phi_{S}(\mathcal{T})
	\end{align*}
	It is obvious from the respective definitions that,
	\begin{align}
		\Sigma_{DS}(\mathcal{T})\subseteq \Sigma_{W}(\mathcal{T})\subseteq \Sigma_{S}(\mathcal{T})
	\end{align}
	In the particular case when, $\mathcal{T}$ denotes the \textit{sub-laplacian} on the \textit{Heisenberg Group} $\mathcal{H}_{3}$, we can indeed deduce the following important result.
	\begin{thm}\label{thm5}
		We shall have,
		\begin{align}
			\Sigma_{DS}(\mathcal{L}_{0})=\Sigma_{W}(\mathcal{L}_{0})=\Sigma_{S}(\mathcal{L}_{0})=\left[0,\infty\right)
		\end{align}
	\end{thm}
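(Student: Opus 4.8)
The plan is to pin down all three essential spectra by sandwiching them between $[0,\infty)$ and itself, the heavy lifting having already been carried out in Theorem \ref{thm4}. Recall that $\mathcal{L}_0$ is self-adjoint and that Theorem \ref{thm4} gives $\Sigma(\mathcal{L}_0)=\Sigma_c(\mathcal{L}_0)=[0,\infty)$; in particular $\mathcal{L}_0$ has no eigenvalues, and by Proposition \ref{prop4} no residual spectrum. Since the inclusions $\Sigma_{DS}(\mathcal{L}_0)\subseteq\Sigma_W(\mathcal{L}_0)\subseteq\Sigma_S(\mathcal{L}_0)$ are already in hand, it is enough to prove the two outermost inclusions $[0,\infty)\subseteq\Sigma_{DS}(\mathcal{L}_0)$ and $\Sigma_S(\mathcal{L}_0)\subseteq[0,\infty)$.

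For the upper inclusion I would argue that $\Sigma_S(\mathcal{L}_0)\subseteq\Sigma(\mathcal{L}_0)$: if $\lambda\in\rho(\mathcal{L}_0)$ then $\mathcal{L}_0-\lambda I$ is a bijection of $\mathcal{D}(\mathcal{L}_0)$ onto $L^2(\mathcal{H}_3)$, hence it has zero kernel, closed (indeed full) range, and zero cokernel, so it is Fredholm of index $0$ and $\lambda\notin\Sigma_S(\mathcal{L}_0)$. Combined with Theorem \ref{thm4} this yields $\Sigma_S(\mathcal{L}_0)\subseteq[0,\infty)$.

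For the lower inclusion, fix $\lambda\in[0,\infty)$. By Theorem \ref{thm4} we have $\lambda\in\Sigma_c(\mathcal{L}_0)$, so $\mathcal{L}_0-\lambda I$ is injective, has dense range, and $(\mathcal{L}_0-\lambda I)^{-1}$ is unbounded on that range. The claim is that $\mathrm{Range}(\mathcal{L}_0-\lambda I)$ is not closed: if it were, then being dense it would equal $L^2(\mathcal{H}_3)$, so $\mathcal{L}_0-\lambda I$ would be a closed bijective operator and the bounded inverse theorem would force $(\mathcal{L}_0-\lambda I)^{-1}$ to be bounded, i.e. $\lambda\in\rho(\mathcal{L}_0)$, contradicting $\lambda\in\Sigma(\mathcal{L}_0)$. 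Hence $\lambda\in\Sigma_{DS}(\mathcal{L}_0)$, giving $[0,\infty)\subseteq\Sigma_{DS}(\mathcal{L}_0)$. Stringing the inclusions together,
\[
[0,\infty)\subseteq\Sigma_{DS}(\mathcal{L}_0)\subseteq\Sigma_W(\mathcal{L}_0)\subseteq\Sigma_S(\mathcal{L}_0)\subseteq[0,\infty),
\]
so all four sets coincide with $[0,\infty)$.

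I do not expect a serious obstacle here: the genuinely substantive input — reducing to the twisted Laplacians $L_\tau$ and using that their eigenvalues $(2k+1)|\tau|$ fill out $[0,\infty)$ — is precisely Theorem \ref{thm4}, and what remains is soft functional analysis. The one point deserving a little care is the elementary but essential observation that, for a closed densely defined operator on a Hilbert space, a point of the continuous spectrum automatically has non-closed range; this is exactly what prevents $\Sigma_{DS}(\mathcal{L}_0)$ from being a proper subset of $[0,\infty)$ and makes the three essential spectra collapse onto the full spectrum.
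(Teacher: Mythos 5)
Your proposal is correct and follows essentially the same route as the paper: sandwich the three essential spectra via $\Sigma_{DS}(\mathcal{L}_{0})\subseteq\Sigma_{W}(\mathcal{L}_{0})\subseteq\Sigma_{S}(\mathcal{L}_{0})\subseteq\Sigma(\mathcal{L}_{0})=[0,\infty)$ and then show $[0,\infty)\subseteq\Sigma_{DS}(\mathcal{L}_{0})$ by contradiction, using that closedness of the range together with its density (since $\lambda\in\Sigma_{c}(\mathcal{L}_{0})$ by Theorem \eqref{thm4}) would force $\mathcal{L}_{0}-\lambda I$ to be bijective and hence $\lambda\in\rho(\mathcal{L}_{0})$. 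You merely spell out the steps the paper leaves implicit (the upper inclusion via Fredholmness of a bijection and the appeal to the bounded inverse theorem), which is a faithful completion rather than a different argument.
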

	\begin{proof}
		It only requires us to verify that,
		\begin{align*}
			\left[0,\infty\right)\subseteq \Sigma_{DS}(\mathcal{L}_{0})
		\end{align*}
		Assume, $\lambda\in \left[0,\infty\right)$, although, $\lambda\notin \Sigma_{DS}(\mathcal{L}_{0})$. Thus, $Range(\mathcal{L}_{0}-\lambda I)$ is \textit{closed} in $L^{2}(\mathcal{H}_{3})$. This in turn helps us conclude that, $\mathcal{L}_{0}-\lambda I$ is \textit{bijective}, i.e., $\lambda\in \mathcal{L}_{0}$, a contradiction. Hence the proof is complete.
	\end{proof}
	\begin{rmk}\label{rmk6}
		Similar technique can be implemented to compute the \textit{Spectrum} of the unique \textbf{self-adjoint extension} $\Delta_{\mathcal{H}_{3},0}$ of the \textit{Laplacian} $\Delta_{\mathcal{H}_{3}}$ on the \textit{Heisenberg Group} $\mathcal{H}_{3}$ defined as,
		\begin{align*}
			\Delta_{\mathcal{H}_{3}}=-\left(X^{2}+Y^{2}+T^{2}\right)
		\end{align*}
		In fact, we have \cite{33},
		\begin{align*}
			\Sigma\left(\Delta_{\mathcal{H}_{3},0}\right)=\Sigma_{c}\left(\Delta_{\mathcal{H}_{3},0}\right)=\left[0,\infty\right)
		\end{align*}
		Therefore, we can infer,
		\begin{align}
			\Sigma_{DS}(\Delta_{\mathcal{H}_{3},0})=\Sigma_{W}(\Delta_{\mathcal{H}_{3},0})=\Sigma_{S}(\Delta_{\mathcal{H}_{3},0})=\left[0,\infty\right)
		\end{align}
	\end{rmk}
	\section{Kirchhoff-type Critical Elliptic Equations involving $p$-sub-Laplacians on $\mathcal{H}_{n}$ }
	\subsection{Some Important Concepts}
	A priori given a generalized \textit{Heisenberg Group} $\mathcal{H}_{n}$, a \textit{lie group} of topological dimension $(2n+1)$, having $\mathbb{R}^{2n+1}$ as a background manifold, endowd with the non-Abelian group law,
	\begin{align*}
		\tau\mbox{ }:\mbox{ }\mathcal{H}_{n}\longrightarrow \mathcal
		{H}
		_{n} \mbox{ , }\hspace{10pt} \tau_{\xi}(\xi')=\xi\circ \xi'
	\end{align*}
	where,
	\begin{align*}
		\xi \circ \xi'=\left(x+x',y+y',t+t'+2\sum\limits_{i=1}^{n}(y_{i}x_{i}'-x_{i}y_{i}')\right) \mbox{ , }\hspace{10pt} \forall \hspace{10pt} \xi,\xi' \in \mathcal{H}_{n}
	\end{align*}
	Subsequently, inverse of this operation can be deduced as, $\xi ^{-1}=-\xi$, thus, 
	\begin{align*}
		(\xi\circ \xi')^{-1}=(\xi')^{-1}\circ (\xi)^{-1}
	\end{align*}
	Applying similar concepts as in \eqref{16} the corresponding Lie Algebra of \textit{left-invariant vector fields} is generated by,
	\begin{align}\label{29}
		X_{j}=\partial_{x_{j}}+2y_{j}\partial_{t}\mbox{ , \hspace{20pt}}Y_{j}=\partial_{y_{j}}-2x_{j}\partial_{t}\mbox{ , \hspace{20pt}}T=4\partial_{t}
	\end{align}
	For every $j=1,2,3,\cdots, n $.
	As a consequence, the basis $\beta=\{X_{j},Y_{j},T\}_{j=1(1)n}$ satisfies the \textit{Heisenberg Canonical Communication Relations} for position and momentum,
	\begin{align*}
		[X_{j},Y_{j}]=-\delta_{jk}T
	\end{align*}
	And all other commutators are \textit{zero}.
	\begin{rmk}\label{rmk7}
		A vector field in the span of $\beta$ is called \textbf{Horizontal}.
	\end{rmk}
	\begin{dfn}
		(Kor$\acute{a}$nyi Norm)  It can be observed that, the anisotropic dilation structure on the Heisenberg Group $\mathcal{H}_{n}$ induces the \textbf{Kor$\acute{a}$nyi Norm} defined as follows: 
		\begin{align*}
			r(\xi):=r(z,t)=(|z|^{4}+t^{2})^{\frac{1}{4}} \mbox{ , }\hspace{10pt}\forall \hspace{5pt} \xi=(z,t)\in \mathcal{H}_{n}
		\end{align*} 
	\end{dfn}
	Some properties of Kor$\acute{a}$nyi Norm include that, its \textit{homogeneous degree} with resopect to dilations is equal to $1$.\par 
	Subsequently, the \textbf{Kor$\acute{a}$nyi Distance} is defined as :
	\begin{align*}
		d_{H}(\xi,\xi')=r(\xi^{-1}\circ \xi')\mbox{ , }\hspace{10pt}\forall \hspace{5pt} (\xi,\xi')\in \mathcal{H}_{n}\times \mathcal{H}_{n}
	\end{align*}
	And, the \textbf{Kor$\acute{a}$nyi Open Ball} of radius $R$ centered at $\xi_{0}$ is, 
	\begin{align*}
		B_{R}(\xi_{0})=\{\xi\in \mathcal{H}_{n}\mbox{ }|\mbox{ }d_{H}(\xi,\xi_{0})<R\}
	\end{align*}
	\begin{rmk}\label{rmk8}
		We can indeed infer that, the \textit{Haar Measure} on $\mathcal{H}_{n}$ is consistent with the \textit{Lebesgue Measure} on $\mathbb{R}^{2n+1}$, and is invariant under the left translations of $\mathcal{H}_{n}$. Moreover, it is $Q$-Homogeneous with respect to dilations ($Q$ denotes the \textit{Hausdorff Dimension}).\par 
		Thus, the \textit{topological dimension} of $\mathcal{H}_{n}$ (is equal to $2n+1$) is strictly less than $Q=2n+2$.
	\end{rmk}
	\begin{dfn}
		We define the \textbf{Horizontal Gradient} of a $C^{1}$-function $u:\mathcal{H}_{n}\longrightarrow \mathbb{R}$ as:
		\begin{align}
			D_{H}(u)=\sum\limits_{j=1}^{n}\left((X_{j}u)X_{j}+(Y_{j}u)Y_{j}\right)
		\end{align}
	\end{dfn}
	An important observation is that, $D_{H}u$ is in fact an element of $span(\beta)$. Thus, we can define the natural inner product in $span(\beta)$ as :
	\begin{align*}
		(X,Y)_{H}:=\sum\limits_{j=1}^{n}\left(x^{j}y^{j}+\tilde{x}^{j}\tilde{y}^{j}\right)
	\end{align*}
	For every $X=\{x^{j}X_{j}+\tilde{x}^{j}Y_{j}\}_{j=1(1)n}$ , $Y=\{y^{j}X_{j}+\tilde{y}^{j}Y_{j}\}_{j=1(1)n}$. This eventually helps us define the \textbf{Hilbertian Norm},
	\begin{align}
		|D_{H}(u)|:=\sqrt{\left(D_{H}(u),D_{H}(u)\right)_{H} }
	\end{align}
	for any horizontal vector field $D_{H}(u)$.
	\begin{dfn}\label{dfn23}
		Given any horizontal vector field function, $X=X(\xi)$, $X=\{x^{j}X_{j}+\tilde{x}^{j}Y_{j}\}_{j=1(1)n}$ of class $C^{1}(\mathcal{H}_{n},\mathbb{R}^{2n})$, the \textbf{Horizontal Divergence} of $X$ is defined as,
		\begin{align*}
			div_{H}X=\sum\limits_{j=1}^{n}\left(X_{j}(x^{j})+Y_j(y^{j})\right)
		\end{align*}
	\end{dfn}
	We can generalize the notion of \textit{sub-Laplacians} in the case for $\mathcal{H}_{3}$ to a generalized Heisenberg Group $\mathcal{H}_{n}$.
	\begin{dfn}
		(sub-Laplacian)  For every $u\in C^{2}(\mathcal{H}_{n})$, the \textbf{sub-Laplacian} or, \textit{Kohn-Spencer Laplacian} of $u$ is defined as:
		\begin{align*}
			\Delta_{H}(u)=\sum\limits_{j=1}^{n}\left(X^{2}_{j}+Y^{2}_{j}\right)u
		\end{align*} 
		\begin{align}
			=\sum\limits_{j=1}^{n}\left(\frac{\partial^{2}}{\partial x^{2}_{j}}+\frac{\partial^{2}}{\partial y^{2}_{j}}+4y_{j}\frac{\partial^{2}}{\partial x_{j}\partial t}-4x_{j}\frac{\partial^{2}}{\partial y_{j}\partial t}\right)u+4|z|^{2}\frac{\partial^{2}u}{\partial^{2} t}
		\end{align}
	\end{dfn}
	H$\ddot{o}$rmander \cite{18} established the fact that, $\Delta_{H}$ is \textit{Hypoelliptic}. To be more precise,
	\begin{align*}
		\Delta_{H}(u)=div_{H}D_{H}(u) \mbox{ , }\hspace{10pt} \forall \hspace{10pt} u\in C^{2}(\mathcal{H}_{n})
	\end{align*}
	We can in fact further generalize the \textit{Kohn-Spencer Laplacian} to obtain the so called \textbf{p-Laplacian} on $\mathcal{H}_{n}$, having the following expression:
	\begin{align}
		\Delta_{H,p}(\varphi)=div_{H}\left( |D_{H}(\varphi) |^{p-2}_{H}D_{H}(\varphi)\right)
	\end{align}
	for every $\varphi \in C^{\infty}_{c}(\mathcal{H}_{n})$.For further study, interested readers can refer to \cite{34}, \cite{35}, \cite{36}, \cite{37}.\par 
	let us recall some significant properties of \textit{classic Sobolev Spaces} on $\mathcal{H}_{n}$. 
	\begin{dfn}
		The standard \textbf{$L^{p}$-norm} is defined as,
		\begin{align*}
			||u||^{p}_{p}=\int\limits_{\omega}|u|^{p}d\xi \mbox{ , }\hspace{10pt} \forall \hspace{5pt} u\in \Omega
		\end{align*}
	\end{dfn}
	A priori given $\Omega$ to be a \textit{bounded Lipschitz Domain} in $\mathcal{H}_{n}$ or, $\Omega=\mathcal{H}_{n}$. Then we denote $W^{1,p}(\Omega)$ as the \textit{Horizontal Sobolev Space} of the functions $u\in L^{p}(\Omega)$, provided, $D_{H}(u)$ exists in the sense of \textit{distributions}, and furthermore, $|D_{H}(u)|\in L^{p}(\Omega)$, endowed with the norm,
	\begin{align*}
		||u||_{W^{1,p}(\Omega)}=\left(||u||^{p}_{p}+||D_{H}(u)||^{p}_{p}\right)^{\frac{1}{p}}
	\end{align*}
	Consider the function space,
	\begin{align*}
		HW^{1,p}_{V}(\mathcal{H}_{n}):= \left\{u\in W^{1,p}(\mathcal{H}_{n}):\int\limits_{\mathcal{H}_{n}}V(\xi)|u(\xi)|^{p}d\xi<\infty\right\}
	\end{align*}
	with the followig norm defined on it,
	\begin{align}
		||u||=||u||_{HW^{1,p}_{V}(\mathcal{H}_{n})}:=\left(||D_{H}(u)||^{p}_{p}+||u||^{p}_{p,V}\right)^{\frac{1}{p}}
	\end{align}
	and,
	\begin{align}
		||u||^{p}_{p,V}=\int\limits_{\mathcal{H}_{n}}V(\xi)|u(\xi)|^{p}d\xi
	\end{align}
	Where, $V$ denotes the \textbf{potenial function}. Furthermore, under the assumption that, $V(\xi)\geq V_{0}>0$, we can in fact conclude that, $	HW^{1,p}_{V}(\mathcal{H}_{n})$ is a  \textit{reflexive Banach Space}. For proof involving Euclidean setting, it can be found in \cite[Lemma~10]{38}, whereas, in case for $\mathcal{H}_{n}$, we shall be needing few minor alterations. The continuous embedding of $HW^{1,p}_{V}(\mathcal{H}_{n})\hookrightarrow W^{1,p}(\mathcal{H}_{n})\hookrightarrow L^{t}(\mathcal{H}_{n})$\hspace{10pt} $\forall$ \hspace{5pt} $p\leq t<p^{*}$, where, $p^{*}:=\frac{Qp}{Q-p}$ is the \textit{Critical Sobolev Exponent} on $\mathcal{H}_{n}$.
	\begin{rmk}
		In fact, one can establish that, the best value of the constant $V_{0}$, denoted by $C_{p^{*}}$ is attained in the \textit{Folland-Stein Spaces} $S^{1,p}(\mathcal{H}_{n})$, which, also can be interpreted as the completion of $C^{\infty}_{c}(\mathcal{H}_{n})$ in terms of the norm,
		\begin{align*}
			||D_{H}(u)||_{p}=\left(\hspace{5pt} \int\limits_{\mathcal{H}_{n}}|D_{H}(u)|^{p}_{H}d\xi \hspace{5pt}\right)^{\frac{1}{p}}
		\end{align*}
		Thus, we can obtain the following estimate of $C_{p^{*}}$ of the \textit{Folland-Stein Inequality} as,
		\begin{align*}
			C_{p^{*}}=\inf\limits_{u\in S^{1,p}(\mathcal{H}_{n}),u\neq 0}\frac{||D_{H}(u)||^{p}_{p}}{||u||^{p}_{p^{*}}}
		\end{align*}
		For further details, see \cite{39}.
	\end{rmk}
	\subsection{Introduction to Critical Kirchhoff Equations}
	In this section, we shall deal with a class of \textit{Kirchhoff}-type Critical \textit{Elliptic Equations} (Kirchhoff, $1883$) as a generalization of \textit{D'Alembert}'s Wave Equation for free vibrations of elastic strings, involving $p$-sub-Laplacians, having the following representation,
	\begin{align}\label{30}
		M\left(||D_{H}(u)||^{p}_{p}+||u||^{p}_{p,V}\right)\left\{-\Delta_{H,p}(u)+V(\xi)|u|^{p-2}u\right\}=\lambda f(\xi,u)+|u|^{p^{*}-2}u 
	\end{align}
	\begin{align*}
		\hspace{100pt}	\xi \in \mathcal{H}_{n} \mbox{ , }\hspace{20pt}u\in HW^{1,p}_{V}(\mathcal{H}_{n})
	\end{align*}
	in both non-degenerate and degenerate cases separately. 
	\begin{rmk}
		In \eqref{30}, $\lambda$ is a real parameter, and, $M$ denotes the \textit{Kirchhoff Function}.
	\end{rmk}
	A priori, for pre-determined constants $\rho, P_{0}, h, E, L$ having some physical interpretation, Kirchhoff thus established a model given by the equation,
	\begin{align*}
		\rho \frac{\partial^{2}u}{\partial t^{2}}-\left(\frac{P_{0}}{h}+\frac{E}{2L}\int\limits_{0}^{L} \left|\frac{\partial u}{\partial x}\right|^{2}dx\right)\frac{\partial^{2}u}{\partial x^{2}}=0
	\end{align*}
	In particular, the study of critical Kirchhoff-type problems were first initially studied in the seminal paper of \textit{Br$\acute{e}$zis} \& \textit{Nirenberg} [ref. \cite{40}], in which their intention was to study the Laplacian equations.\par 
	Over the years, there have been many generalizations of \cite{40} in various directions. For instance, \textit{Liao et al.} \cite{41} studied the following non-local problem with Critical Sobolev Exponent of the form,\\
	\begin{align}\label{31}
		- \left\{  a+b\int\limits_{\Omega}\left| \nabla u\right|^{2}dx  \right\}\Delta u= \mu |u|^{2^{*}-2}u+\lambda |u|^{q-2}u \mbox{ ,  }\hspace{20pt} x\in \Omega 
	\end{align}
	\begin{align*}
		u=0 \mbox{ ,  }\hspace{10pt} x\in \partial \Omega
	\end{align*}
	where, $\Omega \subseteq \mathbb{R}^{N}$ ($N\geq 4$) is a smooth bounded domain, $2^{*}=\frac{2N}{N-2}$ is the \textit{Critical Sobolev Exponent}. The existence an multiplicity of the solutions of \eqref{31} are obtained by applying the Variational Methods and the \textit{Critical Point Theorem}. \textit{Liang et al.} \cite{42} effectively followed the similar approach to derive the solutions to the fractional Schr$\ddot{o}$dinger-Kirchhoff equations with electro-magnetic fields and critical non-linearity in the no-degenerate Kirchhoff case by using the fractional versions of the \textit{concentration compactness principle} and\textit{variational methods}. Results related to the existence of solution in case of non-degenerate Kirchhoff problems are illustrated, for example, in \cite{43}, \cite{44}, \cite{45} and \cite{38}.\par 
	Furthermore, there are extensive amount of research which are currently going on for the \textit{degenerate }case. Interested readers can look at the findings of \textit{wang et al.} \cite{46}  and even \textit{Song \& Shi} \cite{47} in this regard.\par 
	The motivation behind studying the problem \eqref{30}  primarily originates from the significant applications of the Heisenberg Group. \textit{Liang \& Pucci} \cite{48} considered a class of critical Kirchhoff-Poisson systems in the Heisenberg group under suitable assumptions. On the contrary, the existence of multiple solutions is obtained by using the symmetric \textbf{Mountain Pass Theorem}. A priori applying this result along with \textit{Singular Trudinger-Moser Inequality}, \textit{Deng \& Tian} \cite{49} discussed the existence of solutions for Kirchhoff-type systems involving $Q$-laplacian operator in the Heisenberg Group,
	\begin{align*}
		-K\left(\int\limits_{\Omega} \left|\nabla_{\mathcal{H}_{n}}u\right|^{Q} d\xi \right)\Delta_{Q}(u)=\lambda \frac{G_{u}(\xi,u,v)}{\rho(\xi)^{\wp}} \hspace{20pt} \mbox{in }\Omega
	\end{align*}
	\begin{align*}
		-K\left(\int\limits_{\Omega} \left|\nabla_{\mathcal{H}_{n}}v\right|^{Q} d\xi \right)\Delta_{Q}(v)=\lambda \frac{G_{v}(\xi,u,v)}{\rho(\xi)^{\wp}} \hspace{20pt} \mbox{in }\Omega
	\end{align*}
	\begin{align*}
		u=v=0 \hspace{20pt} \mbox{ on }\partial \Omega
	\end{align*}
	Where, $\Omega$ is an open, smooth and bounded subset of $\mathcal{H}_{n}$, $K$ is Kirchhoff-type function \& non-linear terms : $G_{u}$ and $G_{v}$ have critical exponent growth.\par 
	Whereas, \textit{Pucci \& Temeprini} \cite{50} studied the $(p,q)$ critical systems on the Heisenberg Group,
	\begin{align*}
		-div_{H}\left(A(\left|D_{H}(u)\right|_{H})\right)+B\left(|u|\right)u=\lambda H_{u}(u,v)+\frac{\alpha}{\wp ^{*}}|v|^{\beta}|u|^{\alpha-2}u
	\end{align*}
	\begin{align*}
		-div_{H}\left(A(\left|D_{H}(v)\right|_{H})\right)+B\left(|v|\right)v=\lambda H_{v}(u,v)+\frac{\beta}{\wp ^{*}}|u|^{\alpha}|v|^{\beta-2}v
	\end{align*}
	Subsequently, the existence of entire non-trivial solutions are obtained by applying the concentration-compactness principle in the vectorial Heisenberg context and variational methods.
	\begin{rmk}
		Further details on these results can be explored from \cite{51}.
	\end{rmk}
	\section{Existence and Multiplicity of Solutions}
	A priori having the concepts discussed in detail in the previous section, we now proceed towards proving the existence and multiplicity for a special class of Kirchhoff-type critical elliptic equations as mentioned in \eqref{30} involving the $p$-sub-Laplacian operators on $\mathcal{H}_{n}$ for both the \textit{degenerate} and the \textit{non-degenerate} case separately.\par
	\subsection{Some Important Assumptions}
	In order to establish our main results, a priori we shall assume that, $M:\mathbb{R}^{+}_{0}\longrightarrow \mathbb{R}^{+}_{0}$ is a continuous and non-decreasing function, the potential function $V$ and $M$. Therefore, they will satisfy the following properties :
	\begin{itemize}\label{32}
		\item $V:\mathcal{H}_{n}\longrightarrow \mathbb{R}_{+}$ is \textit{continuous}, and $\exists$  $V_{0}>0$ such that, $V\geq V_{0}>0$ in $\mathcal{H}_{n}$.
	\end{itemize}
	\begin{enumerate}\label{33}
		\item $\exists$ $m_{0}>0$ such that, $\inf\limits_{t\geq 0}M(t)=m_{0}$.
		\item $\exists$ $\tau\in \left[1,\frac{p^{*}}{p}\right)$ satisfying, 
		\begin{align*}
			\tau \mathcal{M}(t)\geq M(t)t \mbox{ , }\hspace{20pt} \forall \hspace{10pt}t\geq 0 
		\end{align*}
		Where, \begin{align*}
			\mathcal{M}(t):=\int\limits_{0}^{t}M(s)ds.
		\end{align*}
		\item $\exists$ $m_{1}>0$ such that, $M(t)\geq m_{1}t^{\tau-1}$ , \hspace{10pt} $\forall$ $t\geq 0$ and $M(0)=0$. 
	\end{enumerate}
	Furthermore, we impose the following hypotheses on the non-linearity of $f$:
	\begin{itemize}\label{34}
		\item $f:\mathcal{H}_{n}\times \mathbb{R}\longrightarrow \mathbb{R}$is a \textit{Carath$\acute{e}$odory Function} such that, $f$ is odd with respect to the second variable.
		\item $\exists$ a constant $r$ satisfying, $p^{*}>r>p\tau$ such that,
		\begin{align*}
			\left|f(\xi,t)\right|\leq a(\xi)\left|t\right|^{r-2}t \mbox{ , }\hspace{10pt} \mbox{ for a.e. }\xi\in \mathcal{H}_{n} \mbox{ and }t\in \mathbb{R}
		\end{align*} 
		where, $0\leq a(\xi)\in L^{\eta}(\mathcal{H}_{n})\cap L^{\infty}(\mathcal{H}_{n})$, and, $\eta:=\frac{p^{*}}{p^{*}-r}$, $\xi\in \mathcal{H}_{n}$.
		\item $\exists$ a constant $\theta$ satisfying $p\tau<\theta<p^{*}$ such that, $0<\theta F(\xi,t)\leq f(\xi,t)t$ , $\forall$  \hspace{10pt} $t\in \mathbb{R}_{+}$. We define, 
		\begin{align*}
			F(\xi,t):=\int\limits_{0}^{t}f(\xi,s)ds.
		\end{align*}
	\end{itemize}
	\subsection{Palais-Smale Condition $\left(PS\right)_{c}$}
	A priori we adhere to some standard notations, where, $\mathcal{N}(\mathcal{H}_{n})$ denotes the space of all signed finite \textit{Radon Measures} on $\mathcal{H}_{n}$ equipped with the norm. In other words, we identify $\mathcal{N}(\mathcal{H}_{n})$ with the dual of $C_{0}(\mathcal{H}_{n})$, the completion of all continuous functions $u:\mathcal{H}_{n}\longrightarrow \mathbb{R}$, having con=mpact support, and also is connected to the supremum norm $||.||_{\infty}$.\par 
	Important observation one can make here is that, the problem \eqref{30} has a variational structure. The \textit{Euler-Lagrange Functional}, $\mathcal{J}_{\lambda}: HW^{1,p}_{V}(\mathcal{H}_{n})\longrightarrow \mathbb{R}$ associated to this problem is defined as follows :
	\begin{align}\label{35}
		\mathcal{J}_{\lambda}(u)=\frac{1}{p}\mathcal{M}\left(\left|\left|D_{H}(u)\right|\right|^{p}_{p}+\left|\left|u\right|\right|^{p}_{p,V}\right)-\lambda \int\limits_{\mathcal{H}_{n}}F(\xi,u)d\xi -\frac{1}{p^{*}}\int\limits_{\mathcal{H}_{n}}|u|^{p^{*}}d\xi 
	\end{align}
	It implies that, under the conditions described in \eqref{34}, $\mathcal{J}_{\lambda}$ is of class $C^{1}\left(HW^{1,p}_{V}(\mathcal{H}_{n})\right)$. Moreover, for every $u,v\in HW^{1,p}_{V}(\mathcal{H}_{n})$, we define the \textit{Fr$\acute{e}$chet Derivative} of $\mathcal{J}_{\lambda}$ is given as:
	\begin{align*}
		\langle\mathcal{J}'_{\lambda}(u),v\rangle=M\left(\left|\left|D_{H}(u)\right|\right|^{p}_{p}+\left|\left|u\right|\right|^{p}_{p,V}\right)\left(\langle \mathcal{A}(u), v\rangle +\int\limits_{\mathcal{H}_{n}}V(\xi)|u|^{p-2}uvd\xi\right) \\
		\hspace{50pt} -\lambda \int\limits_{\mathcal{H}_{n}}f(\xi,u)uvd\xi-\int\limits_{\mathcal{H}_{n}}|u|^{p^{*}-2}uvd\xi
	\end{align*}
	Where, 
	\begin{align*}
		\langle\mathcal{A}_{p}(u),v\rangle=\int\limits_{\mathcal{H}_{n}}\left|D_{H}(u)\right|^{p-2}_{H}.D_{H}(u).D_{H}(v)d\xi
	\end{align*}
	\begin{rmk}
		It can be verified that the weak solutions for problem \eqref{30} indeed coincide with the \textit{critical points} of $\mathcal{J}_{\lambda}$.
	\end{rmk}
	With all these notations and definitions, we define,
	\begin{dfn}\label{dfn24}
		A sequence $\{u_{n}\}_{n\in \mathbb{N}}\subset X$ is termed as a \textbf{Palais-Smale Sequence} for the functional $\mathcal{J}_{\lambda}$ at level $c$ if,
		\begin{align}\label{36}
			\mathcal{J}_{\lambda}(u_{n})\longrightarrow c \mbox{ , }\hspace{20pt} \mathcal{J}'_{\lambda}(u_{n})\longrightarrow 0 \hspace{10pt} \mbox{ in }X' \mbox{ as } n\rightarrow +\infty 
		\end{align}
	\end{dfn}
	If \eqref{36} implies the existence of a subsequence of $\{u_{n}\}_{n}$ which converges in $X$, we assert that, $\mathcal{J}_{\lambda}$ satisfies the \textbf{Palais-Smale Condition}  $(PS)_{c}$. Moreover, if this strongly convergent subsequence exists only for some $c$ values, we comment that, $\mathcal{J}_{\lambda}$ satisfies a Local \textit{Palais-Smale Condition}. 	
	\subsection{The Non-Degenerate Case}
	In this section, we shall state and prove two theorems which best illustrates our purpose of analyzing existence and multiplicity of solutions for the problem \eqref{30} in the \textit{Non-Degenerate case}.
	\begin{thm}\label{thm6}
		A priori we assume that, \eqref{32} holds true. If $M$ satisfies conditions $(1)$ and $(2)$ of \eqref{33}, and $f$ verifies \eqref{34}, then $\exists$ \hspace{10pt} $\lambda_{1}>0$ such that, for any $\lambda\geq \lambda _{1}$, the problem \eqref{30} has a non-trivial solution in $HW^{1,p}_{V}(\mathcal{H}_{n})$.
	\end{thm}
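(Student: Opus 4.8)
The plan is to obtain the solution as a mountain pass critical point of the $C^{1}$ functional $\mathcal{J}_{\lambda}$ from \eqref{35}; the genuine difficulty is the loss of compactness forced by the critical exponent $p^{*}$, and this is precisely why $\lambda$ has to be taken large.

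First I would verify the mountain pass geometry. Since $M(t)\ge m_{0}>0$ by condition $(1)$ of \eqref{33}, one has $\mathcal{M}(t)\ge m_{0}t$, hence $\tfrac1p\mathcal{M}(\|u\|^{p})\ge\tfrac{m_{0}}{p}\|u\|^{p}$. Using the growth bound on $f$ in \eqref{34}, Hölder's inequality with exponents $\eta=p^{*}/(p^{*}-r)$ and $p^{*}/r$, and the continuous embedding $HW^{1,p}_{V}(\mathcal{H}_{n})\hookrightarrow L^{p^{*}}(\mathcal{H}_{n})$ recalled above, one bounds $\lambda\int_{\mathcal{H}_{n}}F(\xi,u)\,d\xi\le\lambda C_{1}\|a\|_{\eta}\|u\|^{r}$ and $\tfrac1{p^{*}}\int_{\mathcal{H}_{n}}|u|^{p^{*}}d\xi\le C_{2}\|u\|^{p^{*}}$, whence
\[
\mathcal{J}_{\lambda}(u)\ \ge\ \tfrac{m_{0}}{p}\|u\|^{p}-\lambda C_{1}\|u\|^{r}-C_{2}\|u\|^{p^{*}}.
\]
Because $r>p\tau\ge p$ and $p^{*}>p$, for every fixed $\lambda$ there are $\rho_{\lambda},\alpha_{\lambda}>0$ with $\mathcal{J}_{\lambda}\ge\alpha_{\lambda}$ on $\{\|u\|=\rho_{\lambda}\}$, while $\mathcal{J}_{\lambda}(0)=0$. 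For the descending direction I would fix $u_{0}\in HW^{1,p}_{V}(\mathcal{H}_{n})\setminus\{0\}$, $u_{0}\ge 0$: condition $(2)$ of \eqref{33} integrates to $\mathcal{M}(t)\le\mathcal{M}(1)t^{\tau}$ for $t\ge1$, while the Ambrosetti--Rabinowitz inequality in \eqref{34} gives $F(\xi,su_{0})\ge c\,s^{\theta}|u_{0}|^{\theta}-C$ for $s$ large; since $p\tau<\theta$, $\mathcal{J}_{\lambda}(su_{0})\to-\infty$ as $s\to\infty$, producing the far point $e=s_{0}u_{0}$.

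Next I would show that the mountain pass level $c_{\lambda}$, over continuous paths from $0$ to $e$, lies below the compactness threshold once $\lambda$ is large. One has $c_{\lambda}\le\mu_{\lambda}:=\max_{s\ge0}\mathcal{J}_{\lambda}(su_{0})$; since $F(\xi,t)>0$ for $t\neq0$, there is $t_{0}>0$ with $\int_{\mathcal{H}_{n}}F(\xi,t_{0}u_{0})\,d\xi>0$, and from $\mathcal{M}(s)\le C(1+s^{\tau})$ with $p\tau<p^{*}$ one checks that $\mathcal{J}_{\lambda}(su_{0})\ge\varepsilon_{0}>0$ can hold only for $s$ in a fixed compact subinterval of $(0,\infty)$ on which $\int_{\mathcal{H}_{n}}F(\xi,su_{0})\,d\xi\ge\delta>0$; there $\mathcal{J}_{\lambda}(su_{0})\le C-\lambda\delta$, so $\mu_{\lambda}\to0$ as $\lambda\to+\infty$. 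Hence there is $\lambda_{1}>0$ such that, for all $\lambda\ge\lambda_{1}$,
\[
0<c_{\lambda}\le\mu_{\lambda}<c^{*}:=\Big(\tfrac{1}{p\tau}-\tfrac{1}{p^{*}}\Big)(m_{0}C_{p^{*}})^{Q/p},
\]
the level below which $\mathcal{J}_{\lambda}$ recovers compactness.

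Finally I would establish the local condition $(PS)_{c}$ for $c<c^{*}$ and conclude. Using condition $(2)$ of \eqref{33} and the Ambrosetti--Rabinowitz inequality, the combination $\mathcal{J}_{\lambda}(u_{n})-\tfrac1\theta\langle\mathcal{J}'_{\lambda}(u_{n}),u_{n}\rangle$ dominates a positive multiple of $m_{0}\|u_{n}\|^{p}$ (here $p\tau<\theta<p^{*}$), so a $(PS)_{c}$ sequence $\{u_{n}\}$ is bounded and, by reflexivity of $HW^{1,p}_{V}(\mathcal{H}_{n})$, $u_{n}\rightharpoonup u$ along a subsequence. Applying the concentration--compactness principle of Lions in the Heisenberg setting to the measures $|D_{H}(u_{n})|^{p}_{H}\,d\xi$ and $|u_{n}|^{p^{*}}\,d\xi$, together with the Brezis--Lieb lemma and the Folland--Stein inequality with best constant $C_{p^{*}}$, every possible atom $\nu_{j}$ of the critical measure obeys $m_{0}C_{p^{*}}\,\nu_{j}^{p/p^{*}}\le\mu_{j}$; a nonzero atom would carry energy at least $c^{*}$, contradicting $c<c^{*}$, so no concentration occurs, the critical term converges, and — using $m_{0}>0$ to pass to the limit in the nonlocal factor $M(\|u_{n}\|^{p})$ and the strict monotonicity of $\mathcal{A}_{p}$ — $u_{n}\to u$ strongly, i.e. $(PS)_{c}$ holds. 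For $\lambda\ge\lambda_{1}$ the Mountain Pass Theorem then yields a critical point $u$ with $\mathcal{J}_{\lambda}(u)=c_{\lambda}>0=\mathcal{J}_{\lambda}(0)$, so $u\neq0$ is a non-trivial weak solution of \eqref{30} in $HW^{1,p}_{V}(\mathcal{H}_{n})$. The hardest step will be this $(PS)_{c}$ verification: it needs the concentration--compactness machinery adapted to $\mathcal{H}_{n}$ and a delicate passage to the limit through the nonlocal Kirchhoff coefficient, and it is exactly the non-degeneracy hypothesis $(1)$, $m_{0}>0$, that keeps the leading operator uniformly elliptic and makes the threshold $c^{*}$ — hence the whole scheme — effective.
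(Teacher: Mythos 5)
Your proposal follows essentially the same route as the paper: mountain pass geometry for $\mathcal{J}_{\lambda}$, proving that the minimax level $c_{\lambda}$ falls below a fixed compactness threshold of the form $\mathrm{const}\cdot\left(m_{0}C_{p^{*}}\right)^{p^{*}/(p^{*}-p)}$ once $\lambda$ is large (the paper does this by showing the maximizing parameter $t_{\lambda}$ is bounded and tends to $0$, which is equivalent to your uniform-interval argument for $\mu_{\lambda}\to 0$), and then a local $(PS)_{c}$ condition below that threshold, which the paper does not prove itself but delegates to the reference [51] while you sketch it via concentration--compactness, Brezis--Lieb and the Folland--Stein constant. The only discrepancy is your constant $\frac{1}{p\tau}-\frac{1}{p^{*}}$ versus the paper's $\frac{1}{\theta}-\frac{1}{p^{*}}$ (the exponents agree, since $Q/p=p^{*}/(p^{*}-p)$), and this is harmless because the mountain pass level tends to zero as $\lambda\to\infty$, so any fixed positive threshold yields the conclusion.
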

	\begin{thm}\label{thm7}
		A priori we assume that, \eqref{32} holds true. If $M$ satisfies conditions $(1)$ and $(2)$ of \eqref{33}, and $f$ verifies \eqref{34}. Additionally, suppose we consider that, one of the following condition also holds :
		\begin{enumerate}
			\item $\exists$ a positive constant $m^{*}>0$ for every $m_{0}>m^{*}$ and $\lambda>0$.
			\item $\exists$ a positive constant $\lambda_{2}>0$ for every $\lambda>\lambda_{2}$ and, $m_{0}>0$.
		\end{enumerate}
		Then, the problem \eqref{30} admits of at least $n$ pairs of non-trivial weak solutions in $HW^{1,p}_{V}(\mathcal{H}_{n})$.
	\end{thm}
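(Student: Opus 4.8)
The plan is to exploit the $\mathbb{Z}_2$-symmetry of \eqref{30}, which comes from the oddness of $f$ assumed in \eqref{34}, and to run a genus-based symmetric minimax scheme for the even $C^1$-functional $\mathcal{J}_\lambda$ of \eqref{35}, producing $n$ distinct critical values below the level at which compactness is lost. As in the proof of Theorem \ref{thm6}, conditions $(1)$--$(2)$ of \eqref{33}, the subcritical growth $r>p\tau$ of $f$, and the embedding $HW^{1,p}_V(\mathcal{H}_n)\hookrightarrow L^t(\mathcal{H}_n)$ for $p\le t<p^*$ give $\rho,\alpha>0$ with $\mathcal{J}_\lambda(u)\ge\alpha$ whenever $\|u\|=\rho$, since $\tfrac1p\mathcal{M}(\|u\|^p)\ge\tfrac{m_0}p\|u\|^p$ dominates the $L^r$- and $L^{p^*}$-terms near the origin (because $r,p^*>p$); and $\mathcal{J}_\lambda$ is even with $\mathcal{J}_\lambda(0)=0$ since $F(\xi,\cdot)$ is even. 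Integrating condition $(2)$ of \eqref{33} yields $\mathcal{M}(t)\le\mathcal{M}(1)t^\tau$ for $t\ge1$, so on any finite-dimensional subspace $W\subset HW^{1,p}_V(\mathcal{H}_n)$, where all norms are equivalent, the critical term $-\tfrac1{p^*}\|u\|^{p^*}_{p^*}$ overwhelms $\tfrac1p\mathcal{M}(\|u\|^p)$ since $p^*>p\tau$ and $F\ge0$, whence $\mathcal{J}_\lambda\to-\infty$ on $W$.

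With $\gamma$ the Krasnoselskii genus I would set $\Gamma_k=\{A\subset HW^{1,p}_V(\mathcal{H}_n)\setminus\{0\}:A=-A\text{ closed},\ \gamma(A)\ge k\}$ and $c_k=\inf_{A\in\Gamma_k}\sup_{u\in A}\mathcal{J}_\lambda(u)$, so that $0<\alpha\le c_1\le\dots\le c_n<\infty$ by the two geometric facts above. The symmetric deformation lemma then says: as soon as $\mathcal{J}_\lambda$ satisfies $(PS)_{c_k}$, each $c_k$ is a critical value, and any coincidence $c_k=\dots=c_{k+\ell}$ forces the critical set at that level to have genus $\ge\ell+1$, hence to be infinite. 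Thus the theorem reduces to proving (a) $\mathcal{J}_\lambda$ satisfies $(PS)_c$ for every $c<c^*:=(\tfrac1{p\tau}-\tfrac1{p^*})(m_0C_{p^*})^{Q/p}$, and (b) $c_n<c^*$.

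For (a), a $(PS)_c$ sequence is bounded by the Ambrosetti--Rabinowitz-type inequality $0<\theta F\le f(\cdot,u)u$ with $\theta>p\tau$ together with $M(t)t\le\tau\mathcal{M}(t)$; passing to a weak limit $u$ and applying the concentration--compactness principle on $\mathcal{H}_n$, for which $\mathcal{N}(\mathcal{H}_n)$ is the natural ambient space for the defect measures, the sharp Folland--Stein inequality and $M\ge m_0$ force any atom of the weak-$\ast$ limit of $|u_m|^{p^*}$ to carry mass at least $(m_0C_{p^*})^{Q/p}$ and to contribute at least $c^*$ to $c$; hence for $c<c^*$ no atom forms, $u_m\to u$ in $L^{p^*}(\mathcal{H}_n)$, and the $(S_+)$-type monotonicity of $\mathcal{A}_p$ with the continuity and strict positivity of $M$ upgrades this to strong convergence in $HW^{1,p}_V(\mathcal{H}_n)$. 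For (b), I would take $W_n$ spanned by $n$ rescaled and truncated Folland--Stein extremals concentrated near a point and bound $\max_{u\in W_n}\mathcal{J}_\lambda(u)$ from above using $\mathcal{M}(t)\le\mathcal{M}(1)t^\tau$, $F\ge0$ and the sharp constant $C_{p^*}$; this bound tends to $0$ as $m_0\to\infty$, giving a threshold $m^*$ with $m_0>m^*\Rightarrow c_n<c^*$ in case $(1)$, while in case $(2)$ the term $\lambda\int_{\mathcal{H}_n}F$ drags the bound below $c^*$ once $\lambda>\lambda_2$. Combining (a), (b) and the deformation lemma produces at least $n$ pairs $\pm u_1,\dots,\pm u_n$ of nontrivial critical points of $\mathcal{J}_\lambda$, that is, weak solutions of \eqref{30}.

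The main obstacle I anticipate is part (a): carrying out concentration--compactness on $\mathcal{H}_n$ for the \emph{nonlocal} Kirchhoff operator, in which $M$ multiplies the principal part, so that the atom-mass bound and the precise value of the threshold $c^*$ demand simultaneous control of $M$ from below (by $m_0$) and from above (through condition $(2)$ of \eqref{33}), and one must check that the Kirchhoff coefficient does not disrupt the Brezis--Lieb splitting. The level estimate in (b) is the second, more computational, hurdle, since the truncation error of the Folland--Stein bubbles must be balanced against both the Kirchhoff term and the parameters $m_0$ and $\lambda$.
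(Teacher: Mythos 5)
Your overall scheme is the same skeleton as the paper's: the oddness of $f$ makes $\mathcal{J}_{\lambda}$ even, a symmetric genus-based minimax produces levels $c_{1}\le\cdots\le c_{n}$, a local Palais--Smale condition holds below a Folland--Stein threshold, and the two hypotheses (large $m_{0}$, respectively large $\lambda$) are used to push the levels below that threshold; the paper does exactly this via its symmetric mountain pass statement (Theorem \ref{thm9}), Krasnoselskii genus, and the compactness input quoted from \cite{51}. However, there is a genuine gap where you set up the minimax. With your classes $\Gamma_{k}=\{A\subset HW^{1,p}_{V}(\mathcal{H}_{n})\setminus\{0\}:\ A \text{ closed},\ A=-A,\ \gamma(A)\ge k\}$ the claim $0<\alpha\le c_{1}\le\cdots\le c_{n}<\infty$ is false: by your own third geometric fact ($\mathcal{J}_{\lambda}\to-\infty$ on finite-dimensional subspaces, uniformly on spheres there by equivalence of norms), the sphere $\{u\in E_{k}:\|u\|=R\}$ in a $k$-dimensional subspace $E_{k}$ is a closed symmetric set of genus exactly $k$ on which $\sup\mathcal{J}_{\lambda}\to-\infty$ as $R\to\infty$; hence every $c_{k}=-\infty$, and the symmetric deformation argument yields nothing. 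The lower bound $c_{k}\ge\alpha$ requires admissible sets that are forced to meet $\partial B_{\rho}\cap Z$, i.e.\ the linking classes of the symmetric mountain pass theorem: odd maps $\psi$ on $\overline{B_{R_{n}}\cap E_{n}}$ with $\psi=\mathrm{id}$ on the boundary, modulo subsets of controlled genus, exactly the families $G_{n}$ appearing in the paper's proof of Theorem \ref{thm7} via Theorem \ref{thm9} (equivalently, a fountain-theorem formulation). Once you replace your $\Gamma_{k}$ by these classes, the rest of your outline (boundedness of $(PS)$ sequences from the Ambrosetti--Rabinowitz condition with $\theta>p\tau$ and $\tau\mathcal{M}(t)\ge M(t)t$, concentration--compactness in $\mathcal{N}(\mathcal{H}_{n})$, level estimates on finite-dimensional subspaces, and the two-case discussion with the coincidence/genus argument for multiplicity) parallels the paper's Steps 1--3.

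Two further cautions. First, your threshold $c^{*}=\left(\frac{1}{p\tau}-\frac{1}{p^{*}}\right)\left(m_{0}C_{p^{*}}\right)^{Q/p}$ has the correct exponent (indeed $Q/p=p^{*}/(p^{*}-p)$) but a larger prefactor than the one the paper's compactness statements actually provide, namely $\left(\frac{1}{\theta}-\frac{1}{p^{*}}\right)\left(m_{0}C_{p^{*}}\right)^{p^{*}/(p^{*}-p)}$ with $\theta>p\tau$; since your part (b) must place the levels below whatever threshold part (a) genuinely delivers, you should carry the $\frac{1}{\theta}$ constant. Second, in your Case (1) you assert that the finite-dimensional level bound ``tends to $0$ as $m_{0}\to\infty$''; this is backwards, since enlarging $m_{0}$ enlarges $\mathcal{M}$ and hence the Kirchhoff term: in the model case $M\equiv m_{0}$ both the level bound and the threshold scale like $m_{0}^{Q/p}$, so the large-$m_{0}$ case is a comparison of constants (as in the paper's Step 3, Case I), not a vanishing bound, and this comparison is where the real work in that case lies.
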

	Before we indulge ourselves into the proof of these statements, it is imperative to study all the necessary results required for justification of the same.
	\begin{thm}\label{thm8}
		(Mountain Pass Theorem)  For a given real Banach Space $E$ and $\mathcal{J}\in C^{1}(E)$, satisfying $\mathcal{J}(0)=0$. We further assume that,
		\begin{enumerate}
			\item $\exists$ $\rho, \alpha>0$ such that, $\mathcal{J}(u)\geq \alpha$ \hspace{10pt}$\forall$ $u\in E$, with $||u||_{E}=\rho$;
			\item $\exists$ $e\in E$ satisfying $||e||_{E}>\rho$ such that, $\mathcal{J}(e)<0$
		\end{enumerate}
		If we denote, $\gamma:=\left\{\gamma\in C([0,1],E)\mbox{ }|\mbox{ } \gamma(0)=1 \mbox{ , }\gamma(1)=e\right\}$. Then,
		\begin{align}
			c=\inf\limits_{\gamma\in \Gamma}\max\limits_{0\leq t\leq 1}\mathcal{J}(\gamma(t)) \geq \alpha
		\end{align}
		and there exists a $(PS)_{c}$ sequence $\{u_{n}\}_{n} \subset E$.
	\end{thm}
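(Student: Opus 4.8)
The plan is to prove the two assertions of Theorem~\ref{thm8} separately: the minimax lower bound $c\geq\alpha$ by an elementary connectedness argument, and the existence of a $(PS)_c$ sequence by the standard deformation argument (arguing by contradiction). First I would record that here $\gamma(0)=0$ (the statement's ``$\gamma(0)=1$'' is a typo) and that $\Gamma$ is nonempty: the hypotheses $\mathcal{J}(0)=0$, $\mathcal{J}(e)<0<\alpha$ together with (1) force $\|e\|_E>\rho$, so the straight segment $t\mapsto te$ lies in $C([0,1],E)$ and joins $0$ to $e$. For the bound $c\geq\alpha$, fix any $\gamma\in\Gamma$. Since $\|\gamma(0)\|_E=0<\rho<\|e\|_E=\|\gamma(1)\|_E$, the continuous real function $t\mapsto\|\gamma(t)\|_E$ attains the value $\rho$ at some $t_0\in(0,1)$ by the intermediate value theorem; hypothesis (1) gives $\mathcal{J}(\gamma(t_0))\geq\alpha$, hence $\max_{t\in[0,1]}\mathcal{J}(\gamma(t))\geq\alpha$. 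Taking the infimum over $\Gamma$ yields $c\geq\alpha>0$.

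For the $(PS)_c$ sequence I would argue by contradiction: suppose $\mathcal{J}$ admits no $(PS)_c$ sequence. Then there exist $\bar\varepsilon>0$ and $\delta>0$ with $\|\mathcal{J}'(u)\|_{E'}\geq\delta$ whenever $|\mathcal{J}(u)-c|\leq 2\bar\varepsilon$; shrinking $\bar\varepsilon$ we may also assume $2\bar\varepsilon<c$, which is legitimate since $c\geq\alpha>0$, so that the ``bad band'' $\{\,|\mathcal{J}-c|\leq 2\bar\varepsilon\,\}$ contains neither $0$ nor $e$ (because $\mathcal{J}(0)=0$ and $\mathcal{J}(e)<0$). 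Invoking the quantitative deformation lemma, there is $\varepsilon\in(0,\bar\varepsilon)$ and a continuous map $\eta:[0,1]\times E\to E$ with $\eta(0,\cdot)=\mathrm{id}$, with $\eta(1,\cdot)$ equal to the identity outside $\mathcal{J}^{-1}([c-2\bar\varepsilon,c+2\bar\varepsilon])$, and with $\eta(1,\mathcal{J}^{c+\varepsilon})\subseteq\mathcal{J}^{c-\varepsilon}$, where $\mathcal{J}^a:=\{u\in E:\mathcal{J}(u)\leq a\}$.

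Now I would pick $\gamma\in\Gamma$ with $\max_{t\in[0,1]}\mathcal{J}(\gamma(t))\leq c+\varepsilon$, which exists by the definition of $c$ as an infimum, and set $\tilde\gamma(t):=\eta(1,\gamma(t))$. Since the bad band avoids $0$ and $e$, the map $\eta(1,\cdot)$ fixes both endpoints, so $\tilde\gamma\in C([0,1],E)$ with $\tilde\gamma(0)=0$ and $\tilde\gamma(1)=e$, i.e.\ $\tilde\gamma\in\Gamma$. But $\gamma(t)\in\mathcal{J}^{c+\varepsilon}$ for all $t$, hence $\tilde\gamma(t)\in\mathcal{J}^{c-\varepsilon}$, so $\max_t\mathcal{J}(\tilde\gamma(t))\leq c-\varepsilon<c$, contradicting $c=\inf_{\Gamma}\max_{[0,1]}\mathcal{J}$. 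This contradiction shows a $(PS)_c$ sequence must exist.

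The main obstacle is the quantitative deformation lemma itself, whose proof is the technical heart: one constructs a locally Lipschitz pseudo-gradient vector field $W$ for $\mathcal{J}$ on the set of regular points (using $\mathcal{J}\in C^1(E)$ and a partition-of-unity argument on the Banach space $E$), cuts it off with Lipschitz scalar factors so that the flow is stationary away from $\mathcal{J}^{-1}([c-2\bar\varepsilon,c+2\bar\varepsilon])$ and outside a neighbourhood of the band, and then solves the resulting ODE $\dot\sigma=-W(\sigma)$; inside the band one has $\langle\mathcal{J}'(\sigma),W(\sigma)\rangle$ bounded below by a fixed multiple of $\|\mathcal{J}'(\sigma)\|^2\geq\delta^2$, which is exactly what forces $\mathcal{J}$ to drop by at least $2\varepsilon$ along the unit-time flow. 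As an alternative that bypasses the deformation lemma, I note one may instead apply Ekeland's variational principle to $\Psi(\gamma):=\max_{t\in[0,1]}\mathcal{J}(\gamma(t))$ on the complete metric space $(\Gamma,d_\infty)$ with $d_\infty(\gamma_1,\gamma_2)=\max_t\|\gamma_1(t)-\gamma_2(t)\|_E$: since $\Psi$ is lower semicontinuous and bounded below by $\alpha$, Ekeland produces near-minimizing paths $\gamma_n$ that are also ``almost stationary'' for $\Psi$, and a short argument shows that any $t_n$ realizing $\max_t\mathcal{J}(\gamma_n(t))$ yields $u_n:=\gamma_n(t_n)$ with $\mathcal{J}(u_n)\to c$ and $\|\mathcal{J}'(u_n)\|_{E'}\to 0$.
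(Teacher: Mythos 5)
Your argument is correct, but there is nothing in the paper to compare it with: Theorem \ref{thm8} is stated there as a known tool --- the classical Ambrosetti--Rabinowitz mountain pass theorem in its ``$(PS)_c$-sequence'' form, covered by the references to Rabinowitz and Ambrosetti--Rabinowitz --- and the paper never proves it; it only goes on to verify the geometric hypotheses for $\mathcal{J}_{\lambda}$. Your write-up supplies exactly the standard proof: the intermediate-value argument on $t\mapsto\|\gamma(t)\|_{E}$ for $c\geq\alpha$ (after correctly reading the statement's ``$\gamma(0)=1$'' as $\gamma(0)=0$ and the set $\gamma$ as $\Gamma$), and the contradiction argument via the quantitative deformation lemma, with the key observations done properly: the negation of ``no $(PS)_c$ sequence'' gives a uniform gradient bound $\delta$ on a band $|\mathcal{J}-c|\leq 2\bar\varepsilon$, shrinking $\bar\varepsilon$ so that $2\bar\varepsilon<c$ keeps $0$ and $e$ outside the band so that $\eta(1,\cdot)$ fixes the endpoints and $\tilde\gamma\in\Gamma$, and pushing a near-optimal path below $c-\varepsilon$ contradicts the infimum. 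The only point you gloss over is the usual quantitative compatibility between $\varepsilon$ and $\delta$ required by the deformation lemma (e.g.\ $\varepsilon$ small relative to $\delta^{2}$ in Willem's formulation), which is fine at the level of a sketch; your Ekeland alternative is likewise a legitimate route that avoids the deformation machinery altogether. In short: correct, standard, and more than the paper itself provides.
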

	One can in fact verify that, $\mathcal{J}_{\lambda}$ satisfies the geometric properties $(1)$ and $(2)$ of \textit{Mountain Pass Theorem} \eqref{thm8}. 
	\subsubsection{Proof of Theorem \eqref{thm6}}
	First, we intend to establish the following:
	\begin{claim}\label{37}
		We shall have,
		\begin{align}
			0<c_{\lambda}=\inf\limits_{\gamma\in \Gamma}\max\limits_{0\leq t\leq 1}\mathcal{J}_{\lambda}(\gamma(t))<\left(\frac{1}{\theta}-\frac{1}{p^{*}}\right)\left(m_{0}C_{p^{*}}\right)^{\frac{p^{*}}{p^{*}-p}}
		\end{align}
		for large enough $\lambda$. 
	\end{claim}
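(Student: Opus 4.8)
The plan is to handle the strict lower bound $c_\lambda>0$ via the mountain-pass geometry already observed for $\mathcal{J}_\lambda$ right after Theorem~\ref{thm8}, and to obtain the strict upper bound by producing, for every $\lambda$, an admissible path built from one fixed test function along which the energy is small once $\lambda$ is large. Since the right-hand side $\kappa:=\big(\tfrac1\theta-\tfrac1{p^{*}}\big)\big(m_0C_{p^{*}}\big)^{p^{*}/(p^{*}-p)}$ is a \emph{fixed} positive number (positivity uses $\theta<p^{*}$ from \eqref{34} and $m_0,C_{p^{*}}>0$), it suffices to show $c_\lambda\to 0$ as $\lambda\to\infty$.

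For the lower bound I would combine the continuous embedding $HW^{1,p}_V(\mathcal{H}_n)\hookrightarrow L^{r}(\mathcal{H}_n)\cap L^{p^{*}}(\mathcal{H}_n)$, the estimate $0\le F(\xi,t)\le \tfrac1r a(\xi)|t|^{r}$ extracted from \eqref{34} together with Hölder's inequality at exponent $\eta=\tfrac{p^{*}}{p^{*}-r}$, and the elementary bound $\mathcal{M}(s)\ge m_0 s$ coming from condition~(1) of \eqref{33}, to get
\[
\mathcal{J}_\lambda(u)\ \ge\ \frac{m_0}{p}\,\|u\|^{p}-\lambda C_1\|u\|^{r}-C_2\|u\|^{p^{*}},
\]
which, since $p<r<p^{*}$, is bounded below by a positive $\alpha_\lambda$ on a small sphere $\|u\|=\rho_\lambda$; hence $c_\lambda\ge\alpha_\lambda>0$. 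For the upper bound, fix $u_0\in C^\infty_c(\mathcal{H}_n)$ with $u_0\ge 0$, $u_0\not\equiv 0$ and $\int_{\mathcal{H}_n}F(\xi,u_0)\,d\xi>0$ (admissible since \eqref{34} forces $F(\xi,t)>0$ for $t>0$), and set $A:=\|D_H(u_0)\|_p^{p}+\|u_0\|_{p,V}^{p}$, $B:=\|u_0\|_{p^{*}}^{p^{*}}>0$. From condition~(2) of \eqref{33} the map $s\mapsto\mathcal{M}(s)s^{-\tau}$ is non-increasing, so $\mathcal{M}(s)\le \mathcal{M}(1)s^{\tau}$ for $s\ge1$; using $F\ge0$ on $\mathbb{R}_+$ and $\tau<p^{*}/p$ this gives $\mathcal{J}_\lambda(tu_0)\le \tfrac1p\mathcal{M}(t^{p}A)-\tfrac{t^{p^{*}}}{p^{*}}B\to-\infty$ as $t\to+\infty$, so a long enough straight segment from $0$ to $tu_0$ is admissible and $c_\lambda\le d_\lambda:=\sup_{t\ge0}\mathcal{J}_\lambda(tu_0)$. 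Also $\mathcal{J}_\lambda(tu_0)>0$ for small $t>0$ (the $\tfrac{m_0A}{p}t^{p}$ term dominates since $p<r$ and $p<p^{*}$) and tends to $-\infty$, so $d_\lambda>0$ is attained at some $t_\lambda>0$; and $\partial_\lambda\mathcal{J}_\lambda(tu_0)=-\int_{\mathcal{H}_n}F(\xi,tu_0)\,d\xi\le0$ shows $\lambda\mapsto d_\lambda$ is non-increasing, hence $d_\lambda\downarrow\ell\ge0$.

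The final step is to rule out $\ell>0$. Assuming $d_\lambda\ge\ell>0$ for all $\lambda$, pass to a subsequence with $t_\lambda\to t_*\in[0,+\infty]$. If $t_*=0$ then $d_\lambda\le\tfrac1p\mathcal{M}(t_\lambda^{p}A)\to0$, impossible. If $t_*=+\infty$ then for large $\lambda$, $\mathcal{M}(t_\lambda^{p}A)\le \mathcal{M}(1)A^{\tau}t_\lambda^{p\tau}$ and $p\tau<p^{*}$ give $d_\lambda\le \tfrac{\mathcal{M}(1)A^{\tau}}{p}t_\lambda^{p\tau}-\tfrac{B}{p^{*}}t_\lambda^{p^{*}}\to-\infty$, impossible. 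If $t_*\in(0,+\infty)$ then, since $f(\xi,\cdot)>0$ on $(0,\infty)$ makes $F(\xi,\cdot)$ increasing, for large $\lambda$ we have $\int_{\mathcal{H}_n}F(\xi,t_\lambda u_0)\,d\xi\ge\int_{\mathcal{H}_n}F(\xi,\tfrac{t_*}{2}u_0)\,d\xi=:c_*>0$ while $\mathcal{M}(t_\lambda^{p}A)\le\mathcal{M}((2t_*)^{p}A)$, so $d_\lambda\le\tfrac1p\mathcal{M}((2t_*)^{p}A)-\lambda c_*\to-\infty$, impossible. Hence $\ell=0$, so $d_\lambda\to0$ and for all sufficiently large $\lambda$ one has $0<c_\lambda\le d_\lambda<\kappa$, which is the assertion. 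I expect the case $t_*=+\infty$ to be the only genuine obstacle: it is precisely there that condition~(2) on the Kirchhoff function — equivalently the strict subcriticality $\tau<p^{*}/p$ — is indispensable, since it is what forces the critical term $\tfrac{t^{p^{*}}}{p^{*}}B$ to eventually dominate the Kirchhoff term.
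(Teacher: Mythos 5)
Your argument is correct and follows essentially the same route as the paper: both reduce the claim to showing that $\sup_{t\ge 0}\mathcal{J}_{\lambda}(t u_{0})\to 0$ as $\lambda\to\infty$ along a ray through a fixed test function (the threshold $\left(\frac{1}{\theta}-\frac{1}{p^{*}}\right)\left(m_{0}C_{p^{*}}\right)^{\frac{p^{*}}{p^{*}-p}}$ being a fixed constant), by studying the maximizing parameter $t_{\lambda}$ --- your trichotomy on $\lim t_{\lambda}$ (excluding $t_{*}=\infty$ via condition $(2)$, i.e.\ $p\tau<p^{*}$, and $t_{*}\in(0,\infty)$ via blow-up of the $\lambda\int F$ term) is a reorganization, as a single contradiction, of the paper's steps proving $\{t_{\lambda}\}$ bounded and $t_{\lambda}\to 0$. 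The only difference is cosmetic: you spell out the mountain-pass lower bound $c_{\lambda}>0$ and the monotonicity of $d_{\lambda}$ in $\lambda$, which the paper merely asserts, and you avoid invoking the maximizer identity \eqref{38}.
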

	\begin{proof}
		Choose $v_{0}\in HW^{1,p}_{V}(\mathcal{H}_{n})$ such that,
		\begin{align*}
			||v_{0}||=1 \mbox{ , and, }\hspace{20pt}\lim\limits_{t\rightarrow \infty}\mathcal{J}_{\lambda}(tv_{0})=-\infty
		\end{align*}
		Therefore, $\sup\limits_{t\geq 0}\mathcal{J}_{\lambda}(tv_{0})=\mathcal{J}_{\lambda}(t_{\lambda}v_{0})$ for some $t_{\lambda}>0$. Thus, $t_{\lambda}$ satisfies,
		\begin{align*}
			M\left(\left|\left|D_{H}(t_{\lambda}v_{0})\right|\right|^{p}_{p}+\left|\left|t_{\lambda}v_{0}\right|\right|^{p}_{p,V}\right).\left(\left|\left|D_{H}(t_{\lambda}v_{0})\right|\right|^{p}_{p}+\left|\left|t_{\lambda}v_{0}\right|\right|^{p}_{p,V}\right)
		\end{align*}
		\begin{align}\label{38}
			=\lambda \int\limits_{\mathcal{H}_{n}}f(\xi,t_{\lambda}v_{0})|t_{\lambda}v_{0}|^{2}d\xi + \int\limits_{\mathcal{H}_{n}}|t_{\lambda}v_{0}|^{p^{*}}d\xi
		\end{align}
		It suffices to prove that, $\{t_{\lambda}\}_{\lambda>0}$ is bounded. Since, $t_{\lambda}\geq 1$ $\forall$  $\lambda>0$. Thus, by property $(2)$ of \eqref{33} and \eqref{38}, we deuce that,
		\begin{center}
			$	\tau \mathcal{M}(1)t^{2p\tau}_{\lambda}\geq \tau \mathcal{M}(1)(||t_{\lambda}v_{0}||^{p})^{\tau}$
		\end{center}
		\begin{align*}
			\geq 	M\left(\left|\left|D_{H}(t_{\lambda}v_{0})\right|\right|^{p}_{p}+\left|\left|t_{\lambda}v_{0}\right|\right|^{p}_{p,V}\right).\left(\left|\left|D_{H}(t_{\lambda}v_{0})\right|\right|^{p}_{p}+\left|\left|t_{\lambda}v_{0}\right|\right|^{p}_{p,V}\right) \\
			=\lambda \int\limits_{\mathcal{H}_{n}}f(\xi,t_{\lambda}v_{0})|t_{\lambda}v_{0}|^{2}d\xi + \int\limits_{\mathcal{H}_{n}}|t_{\lambda}v_{0}|^{p^{*}}d\xi
		\end{align*}
		\begin{align}\label{39}
			\geq t^{p^{*}}_{\lambda}\int\limits_{\mathcal{H}_{n}}|v_{0}|^{p^{*}}d\xi 
		\end{align}
		It follows from \eqref{39} that, $\{t_{\lambda}\}_{\lambda>0}$ is bounded, since, $p\tau<p^{*}$.
		Next, we shall prove that, $t_{\lambda}\longrightarrow 0$ as, $\lambda\longrightarrow \infty$. By contradiction, suppose, $\exists$ $t_{0}>0$ and a sequence $\{\lambda_{n}\}_{n}$ with $\lambda_{n}\longrightarrow \infty$ as $n\rightarrow \infty$, and consequently, $t_{\lambda_{n}}\longrightarrow t_{0}$ as $n\rightarrow \infty$.\par 
		A simple application of the \textit{Lebesgue Dominated Convergence Theorem} yields,
		\begin{align*}
			\int\limits_{\mathcal{H}_{n}}f(\xi, t_{\lambda_{n}}v_{0})\left|t_{\lambda_{n}}v_{0}\right|^{2}d\xi \longrightarrow \int\limits_{\mathcal{H}_{n}}f(\xi, t_{\lambda}v_{0})\left|t_{\lambda}v_{0}\right|^{2}d\xi 
		\end{align*}
		as $n\rightarrow \infty$. Thus, we can conclude that, 
		\begin{align*}
			\lambda_{n}\int\limits_{\mathcal{H}_{n}}f(\xi, t_{\lambda}v_{0})\left|t_{\lambda}v_{0}\right|^{2}d\xi \longrightarrow \infty \mbox{ , as, }\hspace{10pt} n\rightarrow \infty
		\end{align*}
		A contradiction to the fact that, \eqref{38} holds true. Thus, $t_{\lambda}\longrightarrow 0$ as, $\lambda\rightarrow \infty$. Another deduction which can indeed be made from \eqref{38} is,
		\begin{align*}
			\lim\limits_{\lambda\rightarrow \infty}\lambda \int\limits_{\mathcal{H}_{n}}f(\xi, t_{\lambda}v_{0})\left|t_{\lambda}v_{0}\right|^{2}d\xi =0
		\end{align*}
		and,
		\begin{align*}
			\lim\limits_{\lambda\rightarrow \infty}\int\limits_{\mathcal{H}_{n}}\left|t_{\lambda}v_{0}\right|^{p^{*}}d\xi =0
		\end{align*}
		A priori, using the deduction, $t_{\lambda}\longrightarrow 0$ as, $\lambda\rightarrow \infty$ and the definition of $\mathcal{J}_{\lambda}$, we obtain,
		\begin{align*}
			\lim\limits_{\lambda\rightarrow \infty}\left(\sup\limits_{t\geq 0}\mathcal{J}_{\lambda}(tv_{0})\right)=	\lim\limits_{\lambda\rightarrow \infty}\mathcal{J}_{\lambda}(tv_{0})=0
		\end{align*}
		Hence, $\exists$ $\lambda_{1}>0$ satisfying for every $\lambda\geq \lambda_{1}$,
		\begin{align*}
			\sup\limits_{t\geq 0}\mathcal{J}_{\lambda}(tv_{0})<\left(\frac{1}{\theta}-\frac{1}{p^{*}}\right)\left(m_{0}C_{p^{*}}\right)^{\frac{p^{*}}{p^{*}-p}}
		\end{align*}
		Choosing $e=t_{1}v_{0}$, for large enough $t_{1}$, it can be verified that, 
		\begin{center}
			$\mathcal{J}_{\lambda}(e)<0$
		\end{center}
		\begin{align*}
			\implies 0<c_{\lambda}\leq \max\limits_{t\in [0,1]}\mathcal{J}_{\lambda}(\gamma(t))\hspace{40pt} \mbox{, by setting, }\gamma(t)=tt_{1}v_{0}
		\end{align*}
		Therefore, we conclude,
		\begin{align*}
			0<c_{\lambda}=\sup\limits_{t\geq 0}\mathcal{J}_{\lambda}(tv_{0})<\left(\frac{1}{\theta}-\frac{1}{p^{*}}\right)\left(m_{0}C_{p^{*}}\right)^{\frac{p^{*}}{p^{*}-p}}
		\end{align*}
		for $\lambda$ large enough, which completes the proof.
	\end{proof}
	\subsubsection{Proof of Theorem \eqref{thm7}}
	To prove the theorem, we shall in fact use the concept related to \textit{Krasnoselskii}'s Genus Theory \cite{52}. Given a \textit{Banach Space} $X$, and let, $\Lambda$ denotes the class of all \textit{closed subsets} $A\subset X\setminus \{0\}$ that are symmetric with respect to the origin ( in other words, $u\in A$ implies, $-u\in A$ ).
	\begin{thm}\label{thm9}
		For an infinite dimensional Banach Space $X$ and $\mathcal{J}\in C^{1}(X)$ be an even functional, such that, $\mathcal{J}(0)=0$. Further, we assume that, $X=Y\oplus Z$, $Y$ being finite dimensional. Then, $\mathcal{J}$ satisfies the following conditions,
		\begin{enumerate}
			\item $\exists$ constants $\rho, \alpha>0$ satisfying, $\mathcal{J}(u)\geq \alpha$ for every $u\in \partial B_{\rho}\cap Z$.
			\item $\exists$ $\Theta>0$ such that, $\mathcal{J}$ satisfies the $(PS)_{c}$ condition $\forall$ $c$, with $c\in (0,\Theta)$.
			\item Given any finite dimensional subspace $\tilde{X}\subset X$, $\exists$ $R=R(\tilde{X})>0$ satisfying $\mathcal{J}(u)\leq 0$ on $\tilde{X}\setminus B_{R}$
		\end{enumerate}
	\end{thm}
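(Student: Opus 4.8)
The statement is meaningful only when $\mathcal{J}$ is read as the Euler--Lagrange functional $\mathcal{J}_{\lambda}$ of \eqref{35} on $X=HW^{1,p}_{V}(\mathcal{H}_{n})$, since conditions $(1)$--$(3)$ fail for a generic even $C^{1}$ functional; the assumptions \eqref{32}, conditions $(1)$ and $(2)$ of \eqref{33}, and \eqref{34} are exactly what force them. The plan is to verify the three conditions one at a time, writing $\|u\|^{p}=\|D_{H}(u)\|^{p}_{p}+\|u\|^{p}_{p,V}$ throughout and abbreviating $m_{0}:=\inf_{t\ge0}M(t)>0$.

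For condition $(1)$, the non-degeneracy $M\ge m_{0}$ gives $\mathcal{M}(s)\ge m_{0}s$, so $\frac1p\mathcal{M}(\|u\|^{p})\ge\frac{m_{0}}{p}\|u\|^{p}$. The growth bound in \eqref{34} yields $|F(\xi,t)|\le \frac1r a(\xi)|t|^{r}$, whence by H\"older (exponents $\eta=\frac{p^{*}}{p^{*}-r}$ and $\frac{p^{*}}{r}$) together with the embedding $HW^{1,p}_{V}(\mathcal{H}_{n})\hookrightarrow L^{p^{*}}(\mathcal{H}_{n})$ one gets $\lambda\int_{\mathcal{H}_{n}}F(\xi,u)\,d\xi\le C\lambda\|a\|_{\eta}\|u\|^{r}$, while $\frac1{p^{*}}\int_{\mathcal{H}_{n}}|u|^{p^{*}}\,d\xi\le C'\|u\|^{p^{*}}$. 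Collecting these,
\begin{align*}
\mathcal{J}_{\lambda}(u)\ge \|u\|^{p}\left(\frac{m_{0}}{p}-C\lambda\|a\|_{\eta}\|u\|^{r-p}-C'\|u\|^{p^{*}-p}\right).
\end{align*}
Since $p\le p\tau<r<p^{*}$, both $r-p>0$ and $p^{*}-p>0$, so for $\rho$ small the bracket exceeds $\frac{m_{0}}{2p}$ on $\|u\|=\rho$; taking $\alpha:=\frac{m_{0}}{2p}\rho^{p}$ gives $(1)$ on the whole sphere, hence in particular on $\partial B_{\rho}\cap Z$.

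Condition $(2)$ is the heart of the matter and where I expect the real work. Given a $(PS)_{c}$ sequence $\{u_{n}\}$, I would first prove boundedness by estimating $\mathcal{J}_{\lambda}(u_{n})-\frac1\theta\langle\mathcal{J}'_{\lambda}(u_{n}),u_{n}\rangle$: the Kirchhoff part is bounded below by $(\frac1p-\frac{\tau}{\theta})\mathcal{M}(\|u_{n}\|^{p})\ge(\frac1p-\frac{\tau}{\theta})m_{0}\|u_{n}\|^{p}$ via condition $(2)$ of \eqref{33}, the term in $f$ is nonnegative by the Ambrosetti--Rabinowitz inequality $\theta F\le ft$ of \eqref{34}, and the critical term is nonnegative since $\theta<p^{*}$; as $\frac1p-\frac{\tau}{\theta}>0$ (because $\theta>p\tau$), this forces $\{u_{n}\}$ bounded, so up to a subsequence $u_{n}\rightharpoonup u$. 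The obstruction is the non-compact critical embedding: I would invoke the concentration--compactness principle on $\mathcal{H}_{n}$ to express the weak-$*$ limits of $|D_{H}(u_{n})|^{p}$ and $|u_{n}|^{p^{*}}$ as measures carrying at most countably many atoms $\xi_{j}$, with the Folland--Stein inequality giving $\nu_{j}\le(C_{p^{*}}^{-1}\mu_{j})^{p^{*}/p}$. A quantitative energy computation then shows each surviving atom contributes at least $(\frac1\theta-\frac1{p^{*}})(m_{0}C_{p^{*}})^{p^{*}/(p^{*}-p)}$ to $c$, so setting $\Theta:=(\frac1\theta-\frac1{p^{*}})(m_{0}C_{p^{*}})^{p^{*}/(p^{*}-p)}$ (the same threshold as in Claim \eqref{37}) makes $c<\Theta$ rule out every atom, forcing $\int|u_{n}|^{p^{*}}\to\int|u|^{p^{*}}$. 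Finally I would upgrade weak to strong convergence using the $(S_{+})$ property of $\mathcal{A}_{p}$ and the continuity of $M$, testing $\mathcal{J}'_{\lambda}(u_{n})-\mathcal{J}'_{\lambda}(u)$ against $u_{n}-u$. The delicate point is the atom estimate below $\Theta$, which must be matched exactly to the constant used at the mountain-pass level.

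For condition $(3)$, I would use the upper bound $\mathcal{M}(s)\le\mathcal{M}(1)s^{\tau}$ for $s\ge1$ (obtained by integrating $\mathcal{M}'/\mathcal{M}\le\tau/s$, itself a consequence of condition $(2)$ of \eqref{33}), giving $\frac1p\mathcal{M}(\|u\|^{p})\le\frac{\mathcal{M}(1)}{p}\|u\|^{p\tau}$, and the superlinear lower bound $F(\xi,t)\ge c_{1}|t|^{\theta}-c_{2}$ from integrating $\theta F\le ft$. Discarding the nonpositive critical term and using that on a fixed finite-dimensional $\tilde{X}$ all norms are equivalent, so $\|u\|_{\theta}\ge c\|u\|$, I obtain
\begin{align*}
\mathcal{J}_{\lambda}(u)\le\frac{\mathcal{M}(1)}{p}\|u\|^{p\tau}-\lambda c_{1}c^{\theta}\|u\|^{\theta}+c_{3}.
\end{align*}
Because $\theta>p\tau$, the negative $\theta$-power dominates as $\|u\|\to\infty$, so $\mathcal{J}_{\lambda}(u)\to-\infty$ on $\tilde{X}$ and $(3)$ holds for $R=R(\tilde{X})$ large. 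The only nontrivial ingredient across the three parts is the concentration--compactness step in $(2)$; conditions $(1)$ and $(3)$ reduce to elementary estimates once the Folland--Stein embedding and the finite-dimensional norm equivalences are in place.
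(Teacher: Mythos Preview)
The paper does not prove Theorem~\eqref{thm9} at all. Despite the garbled wording ``Then, $\mathcal{J}$ satisfies the following conditions,'' items $(1)$--$(3)$ are intended as \emph{hypotheses}, not conclusions, of an abstract symmetric mountain-pass theorem quoted from Rabinowitz~\cite{52}; the actual conclusion (that the levels $c_{j}$ are critical values, with the genus estimate $\gamma(K_{c})\geq m+1$) is stated in the paragraph immediately following the theorem environment. The paper simply invokes this classical result as a tool and gives no argument for it.

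What you have written is therefore not a proof of Theorem~\eqref{thm9} but a verification that the particular functional $\mathcal{J}_{\lambda}$ on $HW^{1,p}_{V}(\mathcal{H}_{n})$ satisfies the hypotheses $(1)$--$(3)$. That verification belongs to the proof of Theorem~\eqref{thm7}: there the paper's ``Step~1'' merely asserts that $\mathcal{J}_{\lambda}$ satisfies $(1)$ and $(3)$ without any computation, and condition~$(2)$ is outsourced to Lemma~\eqref{40}, whose proof is in turn deferred to \cite{51}. Your estimates for $(1)$ and $(3)$ are correct and far more explicit than anything the paper supplies, and your outline for $(2)$ via concentration--compactness with the threshold $\Theta=\bigl(\tfrac{1}{\theta}-\tfrac{1}{p^{*}}\bigr)(m_{0}C_{p^{*}})^{p^{*}/(p^{*}-p)}$ is precisely the content of Lemma~\eqref{40}. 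So the mathematics is sound but misdirected: you are filling in the details the paper skips in Theorem~\eqref{thm7}, not proving the abstract critical-point theorem that Theorem~\eqref{thm9} is meant to quote.
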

	In addition to the above, we further consider, $dim(Y)=k$  and that, $Y=Span\{v_{1},v_{2},\cdots ,v_{k}\}$. For $n\geq k$, we inductively choose that, $v_{n+1}\notin E_{n}=Span\{v_{1},v_{2},\cdots ,v_{n}\}$. Suppose, $R_{n}=R(E_{n})$, and, $\Omega_{n}=B_{R_{n}}\cap E_{n}$. Define, 
	\begin{align*}
		G_{n}:=\left\{\psi\in C(\Omega_{n},X)\mbox{ }|\mbox{ } \psi|_{\partial B_{R_{n}\cap E_{n}}}=id \mbox{ and, }\psi \mbox{  is odd}
		\right\}
	\end{align*}
	and,
	\begin{align*}
		G_{n}:=\left\{\psi(\overline{\Omega_{n}\setminus V})\mbox{ }|\mbox{ } \psi\in G_{n}\mbox{ , }n\geq j \mbox{ , }V\in \Lambda \mbox{ , }\gamma(V)\leq n-j
		\right\}
	\end{align*}
	Where, $0\leq c_{j}\leq c_{j+1}$ and, $c_{j}<\Theta$ for $j>k$, then, we assert that, $c_{j}$ is a \textit{critical value }of $\mathcal{J}$.\par 
	Furthermore, if $c_{j}=c_{j+l}=c<\Theta$ for every $1\leq l\leq m$ and for $j>k$. Thus, $\gamma(K_{c})\geq m+1$, provided,
	\begin{align*}
		K_{c}:=\left\{u\in X \mbox{ }|\mbox{ }\mathcal{J}(u)=c \mbox{ and, }\mathcal{J}'(u)=0\right\}
	\end{align*}
	Now that, we have introduced all the terminologies required for the proof of Theorem \eqref{thm7}, we proceed to the proof of the statement of the same.
	\begin{proof}
		We shall apply Theorem \eqref{thm9} to $\mathcal{J}_{\lambda}$. A priori using the fact that, $HW^{1,p}_{V}(\mathcal{H}_{n})$ is indeed a \textbf{reflexive} \textit{Banach Space} and, $\mathcal{J}_{\lambda}\in C^{1}\left(HW^{1,p}_{V}(\mathcal{H}_{n})\right)$.\par 
		Applying \eqref{35}, we can infer that, the functional $\mathcal{J}_{\lambda}$ satisfies, $\mathcal{J}_{\lambda}(0)=0$. We shall thus prove the theorem in three steps.\\
		\textbf{\underline{Step 1 :}} \par 
		We can in fact obtain that, $\mathcal{J}_{\lambda}$ indeed satisfies $(1)$ and $(3)$ of Theorem \eqref{thm9}.\\
		\textbf{\underline{Step 2 :}} \par 
		We claim that, $\exists$ a monotone increasing sequeence $\{\Phi_{n}\}_{n}$ in $\mathbb{R}_{+}$, such that, 
		\begin{align*}
			c^{\lambda}_{n}=\inf\limits_{E\in \Gamma_{n}}\max\limits_{u\in E}\mathcal{J}_{\lambda}(u)<\Phi_{n}
		\end{align*} 
		The above definition of $	c^{\lambda}_{n}$ allows us to infer, 
		\begin{align*}
			c^{\lambda}_{n}=\inf\limits_{E\in \Gamma_{n}}\max\limits_{u\in E}\mathcal{J}_{\lambda}(u)\leq \inf\limits_{E\in \Gamma_{n}}\max\limits_{u\in E}\left\{\mathcal{M}\left(\left|\left|D_{H}(u)\right|\right|^{p}_{p}+||u||^{p}_{p,V}\right)-\frac{1}{p^{*}}\int\limits_{\mathcal{H}_{n}}|u|^{p^{*}}d\xi\right\}
		\end{align*}
		Set,
		\begin{align*}
			\Phi_{n}=\inf\limits_{E\in \Gamma_{n}}\max\limits_{u\in E}\left\{\mathcal{M}\left(\left|\left|D_{H}(u)\right|\right|^{p}_{p}+||u||^{p}_{p,V}\right)-\frac{1}{p^{*}}\int\limits_{\mathcal{H}_{n}}|u|^{p^{*}}d\xi\right\}
		\end{align*}
		Therefore, $\Phi_{n}<\infty$ and, $\Phi_{n}\leq \Phi_{n+1}$ \hspace{10pt}$\forall$ $n\in \mathbb{N}$ ( Follows from the definition of $\Gamma_{n}$ ).\\
		\textbf{\underline{Step 3 :}} \par 
		We claim that, the problem \eqref{30} has at least $k$ pairs of weak solutions. To this end, we distingish two cases:\\ 
		\textit{Case I :}\par 
		Fix $\lambda >0$. We set $m_{0}$ very large such that, 
		\begin{align*}
			\sup\limits_{n}\Phi_{n}<\left(\frac{1}{\theta}-\frac{1}{p^{*}}\right)\left(m_{0}C_{p^{*}}\right)^{\frac{p^{*}}{p^{*}-p}}
		\end{align*}
		\textit{Case II :}\par 
		using similar discussions as in Theorem \eqref{thm9}, $\exists$ $\lambda_{2}>0$ satisfyng,
		\begin{align*}
			c^{\lambda}_{n}\leq \Phi_{n}<\left(\frac{1}{\theta}-\frac{1}{p^{*}}\right)\left(m_{0}C_{p^{*}}\right)^{\frac{p^{*}}{p^{*}-p}} \hspace{40pt}\forall \mbox{ }\lambda>\lambda_{2}.
		\end{align*}
		Thus, in any case, we must obtain,
		\begin{align*}
			0<c^{\lambda}_{1}\leq c^{\lambda}_{2}\leq \cdots \leq c^{\lambda}_{n}<\Phi_{n}<\left(\frac{1}{\theta}-\frac{1}{p^{*}}\right)\left(m_{0}C_{p^{*}}\right)^{\frac{p^{*}}{p^{*}-p}}
		\end{align*}
		Applying similar arguments as described in \cite{52}, we can in fact guarantee that, the levels, $c^{\lambda}_{1}\leq c^{\lambda}_{2}\leq \cdots \leq c^{\lambda}_{n}$ are indeed the \textit{critical values} of $\mathcal{J}_{\lambda}$.\par 
		If $c^{\lambda}_{j}=c^{\lambda}_{j+1}$ for some $j=1,2,\cdots k-1$, therefore, by \cite[Theorem~$4.2$, Remark~$2.12$]{53}, the set $K_{c^{\lambda}_{j}}$ contains infinitely many distinct points, and hence, the problem \eqref{30} has \textbf{infinitely many weak solutions}. \par 
		Finally thus we conclude that, the problem \eqref{30} has at least $k$ pairs of solutions, and the proof is done.
		
	\end{proof}
	\subsection{The Degenerate Case}
	In this section, we shall investigate the degradation of the problem \eqref{30}. We shall always assume that, $M$ satisfies conditions $(2)$ and $(3)$ of \eqref{33}, and $f$ verifies the coonditions mentioned in \eqref{34}. We state the main results which helps us comment on the existence and the multiplicity of the solutions to problem \eqref{30} in the \textit{Degenerate Case}.
	\begin{thm}\label{thm10}
		A priori we assume that, \eqref{32} holds true. If $M$ satisfies conditions $(2)$ and $(3)$ of \eqref{33}, and $f$ verifies \eqref{34}, then $\exists$ \hspace{10pt} $\lambda_{3}>0$ such that, for any $\lambda\geq \lambda _{3}$, the problem \eqref{30} has a non-trivial solution in $HW^{1,p}_{V}(\mathcal{H}_{n})$.
	\end{thm}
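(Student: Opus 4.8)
\noindent\emph{Proof strategy.} The plan is to reproduce the Mountain Pass argument used for Theorem \eqref{thm6}, applied to the Euler--Lagrange functional $\mathcal{J}_{\lambda}$ of \eqref{35}, but with conditions $(2)$ and $(3)$ of \eqref{33} playing the role previously taken by $(1)$ and $(2)$. The key replacement is that, since $M(t)\geq m_{1}t^{\tau-1}$ with $M(0)=0$, one has the power bound $\mathcal{M}(t)\geq \frac{m_{1}}{\tau}t^{\tau}$, so that $\frac{1}{p}\mathcal{M}\big(\|D_{H}(u)\|_{p}^{p}+\|u\|_{p,V}^{p}\big)\geq \frac{m_{1}}{p\tau}\|u\|^{p\tau}$. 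I would first verify the two geometric hypotheses of Theorem \eqref{thm8}. For $\|u\|=\rho$ small, $\mathcal{J}_{\lambda}(u)\geq \frac{m_{1}}{p\tau}\|u\|^{p\tau}-\lambda C_{1}\|u\|^{r}-C_{2}\|u\|^{p^{*}}$, where the subcritical term is estimated from $|f(\xi,t)|\leq a(\xi)|t|^{r-2}t$ with $a\in L^{\eta}\cap L^{\infty}$ and the embedding $HW^{1,p}_{V}(\mathcal{H}_{n})\hookrightarrow L^{r}(\mathcal{H}_{n})$, and the critical term from the Folland--Stein inequality; since $p\tau<r<p^{*}$ this gives $\mathcal{J}_{\lambda}(u)\geq\alpha>0$ on $\|u\|=\rho$. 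For the second condition, integrating $\tau\mathcal{M}(t)\geq M(t)t$ yields $\mathcal{M}(t)\leq \mathcal{M}(1)t^{\tau}$ for $t\geq 1$, hence for a fixed $v_{0}$ with $\|v_{0}\|=1$ we get $\mathcal{J}_{\lambda}(tv_{0})\leq \frac{\mathcal{M}(1)}{p}t^{p\tau}-\frac{t^{p^{*}}}{p^{*}}\int_{\mathcal{H}_{n}}|v_{0}|^{p^{*}}\,d\xi\to-\infty$ as $t\to\infty$, because $p\tau<p^{*}$ and $F\geq 0$; choosing $e=t_{1}v_{0}$ with $t_{1}$ large gives $\mathcal{J}_{\lambda}(e)<0$, so Theorem \eqref{thm8} supplies a $(PS)_{c_{\lambda}}$ sequence with $c_{\lambda}\geq\alpha>0$.

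Next, exactly as in Claim \eqref{37}, I would show that for $\lambda$ large the level $c_{\lambda}$ remains below the compactness threshold of the degenerate problem, namely a constant of the form $\big(\tfrac{1}{\theta}-\tfrac{1}{p^{*}}\big)\big(m_{1}C_{p^{*}}^{\tau}\big)^{\frac{p^{*}}{p^{*}-p\tau}}$, where the exponent $\frac{p^{*}}{p^{*}-p\tau}$ and the power $C_{p^{*}}^{\tau}$ are forced by the scaling of the Kirchhoff term $\mathcal{M}(\|u\|^{p})\sim\|u\|^{p\tau}$ against $\|u\|_{p^{*}}^{p^{*}}$. The argument mirrors Claim \eqref{37}: $\sup_{t\geq 0}\mathcal{J}_{\lambda}(tv_{0})=\mathcal{J}_{\lambda}(t_{\lambda}v_{0})$ for some $t_{\lambda}>0$ satisfying the corresponding stationarity identity; one shows $\{t_{\lambda}\}_{\lambda>0}$ is bounded (using $p\tau<p^{*}$ and the critical term) and then $t_{\lambda}\to 0$ as $\lambda\to\infty$ (otherwise the Lebesgue Dominated Convergence Theorem would force $\lambda\int_{\mathcal{H}_{n}}f(\xi,t_{\lambda}v_{0})|t_{\lambda}v_{0}|^{2}\,d\xi\to\infty$, contradicting stationarity), so that $\sup_{t\geq 0}\mathcal{J}_{\lambda}(tv_{0})\to 0$ and the strict inequality holds for all $\lambda\geq\lambda_{3}$.

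The step I expect to be the main obstacle is the local Palais--Smale condition $(PS)_{c_{\lambda}}$ below that threshold, which is more delicate here than in the non-degenerate case because the Kirchhoff coefficient $M$ is no longer bounded away from zero. I would first prove that any $(PS)_{c_{\lambda}}$ sequence $\{u_{n}\}$ is bounded in $HW^{1,p}_{V}(\mathcal{H}_{n})$: from $\mathcal{J}_{\lambda}(u_{n})-\tfrac{1}{\theta}\langle\mathcal{J}'_{\lambda}(u_{n}),u_{n}\rangle$, the Ambrosetti--Rabinowitz inequality $0<\theta F(\xi,t)\leq f(\xi,t)t$, the bound $\tau\mathcal{M}(t)\geq M(t)t$ and $p\tau<\theta<p^{*}$ yield $\big(\tfrac{1}{p}-\tfrac{\tau}{\theta}\big)\tfrac{m_{1}}{\tau}\|u_{n}\|^{p\tau}\leq c_{\lambda}+1+o(1)\|u_{n}\|$, hence boundedness since $p\tau>1$. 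Passing to a subsequence $u_{n}\rightharpoonup u$, and the concentration--compactness principle on $\mathcal{H}_{n}$ (working in $\mathcal{N}(\mathcal{H}_{n})$) gives $|D_{H}(u_{n})|_{H}^{p}\rightharpoonup d\mu\geq |D_{H}(u)|_{H}^{p}+\sum_{j}\mu_{j}\delta_{\xi_{j}}$ and $|u_{n}|^{p^{*}}\rightharpoonup d\nu=|u|^{p^{*}}+\sum_{j}\nu_{j}\delta_{\xi_{j}}$ with $C_{p^{*}}\nu_{j}^{p/p^{*}}\leq\mu_{j}$. Testing $\mathcal{J}'_{\lambda}(u_{n})$ against cut-offs concentrated at each $\xi_{j}$ and using $M(t)\geq m_{1}t^{\tau-1}$ on the Kirchhoff factor forces an inequality of type $m_{1}\ell^{\tau-1}\mu_{j}\leq\nu_{j}$, with $\ell$ the limit of $\|D_{H}(u_{n})\|_{p}^{p}+\|u_{n}\|_{p,V}^{p}$ along a subsequence; combined with $C_{p^{*}}\nu_{j}^{p/p^{*}}\leq\mu_{j}$ this yields either $\nu_{j}=0$ or a lower bound on $\nu_{j}$, and substituting the latter back into $\mathcal{J}_{\lambda}(u_{n})-\tfrac{1}{\theta}\langle\mathcal{J}'_{\lambda}(u_{n}),u_{n}\rangle$ contradicts $c_{\lambda}<\big(\tfrac{1}{\theta}-\tfrac{1}{p^{*}}\big)\big(m_{1}C_{p^{*}}^{\tau}\big)^{\frac{p^{*}}{p^{*}-p\tau}}$. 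Thus no concentration occurs, $u_{n}\to u$ in $L^{p^{*}}(\mathcal{H}_{n})$, and a Brezis--Lieb argument together with the $(S_{+})$-property of the operator $\mathcal{A}_{p}$ upgrades this to strong convergence in $HW^{1,p}_{V}(\mathcal{H}_{n})$; since $c_{\lambda}\geq\alpha>0$ the limit $u$ is a non-trivial critical point of $\mathcal{J}_{\lambda}$, hence a non-trivial weak solution of \eqref{30}, for every $\lambda\geq\lambda_{3}$.
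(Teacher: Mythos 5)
Your proposal takes essentially the same route as the paper: Mountain Pass geometry for $\mathcal{J}_{\lambda}$ under conditions $(2)$--$(3)$ of \eqref{33}, the estimate that the level $c_{\lambda}$ falls below the degenerate threshold $\left(\frac{1}{\theta}-\frac{1}{p^{*}}\right)\left(m_{1}C_{p^{*}}^{\tau}\right)^{\frac{p^{*}}{p^{*}-p\tau}}$ for $\lambda$ large (mirroring Claim \eqref{37}), and a local Palais--Smale condition below that level to produce a non-trivial critical point. The only differences are that you sketch the concentration--compactness proof of the compactness step, which the paper simply states as Lemma \eqref{40} and cites to the literature, and that you write the threshold with the constant $m_{1}$ from condition $(3)$, which is the appropriate constant in the degenerate case where the paper's displayed bound still shows $m_{0}$.
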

	\begin{thm}\label{thm11}
		A priori we assume that, \eqref{32} holds true. If $M$ satisfies conditions $(2)$ and $(3)$ of \eqref{33}, and $f$ verifies \eqref{34}. Additionally, suppose we consider that, one of the following condition also holds :
		\begin{enumerate}
			\item $\exists$ a positive constant $m_{*}>0$ for every $m_{1}>m_{*}$ and $\lambda>0$.
			\item $\exists$ a positive constant $\lambda_{4}>0$ for every $\lambda>\lambda_{4}$ and, $m_{1}>0$.
		\end{enumerate}
		Then, the problem \eqref{30} admits of at least $n$ pairs of non-trivial weak solutions in $HW^{1,p}_{V}(\mathcal{H}_{n})$.
	\end{thm}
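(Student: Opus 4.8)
The plan is to follow, almost verbatim, the scheme of the proof of Theorem~\eqref{thm7}, but to replace the non-degenerate structural hypothesis (condition $(1)$ of \eqref{33}) by conditions $(2)$ and $(3)$ of \eqref{33}, and then to apply the symmetric critical point theorem via Krasnoselskii genus theory, Theorem~\eqref{thm9}, to the even functional $\mathcal{J}_\lambda$. First I would record the structural facts already established: since $f$ is odd in its second variable by \eqref{34}, the functional $\mathcal{J}_\lambda$ of \eqref{35} is even, of class $C^1$ on the reflexive Banach space $HW^{1,p}_V(\mathcal{H}_n)$, and $\mathcal{J}_\lambda(0)=0$; one then fixes a splitting $HW^{1,p}_V(\mathcal{H}_n)=Y\oplus Z$ with $\dim Y=k$ and the increasing sequence of subspaces $E_n$ as in the preamble to the proof of Theorem~\eqref{thm7}.

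Second, I would verify the geometric conditions $(1)$ and $(3)$ of Theorem~\eqref{thm9}. The only genuine change from the non-degenerate case is the local linking at the origin: here $\inf M$ need not be positive, so instead of bounding $M$ below by a constant one uses condition $(3)$ of \eqref{33}, which yields $\mathcal{M}(t)\geq \tfrac{m_1}{\tau}t^{\tau}$ and hence $\tfrac1p\mathcal{M}(\|u\|^p)\geq \tfrac{m_1}{p\tau}\|u\|^{p\tau}$. Since $p\tau<r<p^{*}$ by \eqref{34}, the growth bound on $f$ together with the continuous embeddings $HW^{1,p}_V(\mathcal{H}_n)\hookrightarrow L^{r}(\mathcal{H}_n)$ and the Folland--Stein inequality let me absorb $\lambda\int_{\mathcal{H}_n}F(\xi,u)\,d\xi$ and $\tfrac{1}{p^{*}}\int_{\mathcal{H}_n}|u|^{p^{*}}\,d\xi$ for $\|u\|=\rho$ small, giving $\mathcal{J}_\lambda(u)\geq\alpha>0$ on $\partial B_\rho\cap Z$. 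Condition $(3)$ is obtained exactly as before: on a finite-dimensional subspace all norms are equivalent, and the critical term $-\tfrac{1}{p^{*}}\|u\|_{p^{*}}^{p^{*}}$ eventually dominates, so $\mathcal{J}_\lambda\leq 0$ off a large ball.

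Third — and this I expect to be the main obstacle — I would establish the truncated Palais--Smale condition $(PS)_c$ for
\begin{align*}
c<\left(\frac{1}{\theta}-\frac{1}{p^{*}}\right)\left(m_1 C_{p^{*}}\right)^{\frac{p^{*}}{p^{*}-p}},
\end{align*}
that is, the degenerate analogue of Claim~\eqref{37} with $m_0$ replaced by $m_1$. One first shows any $(PS)_c$ sequence is bounded in $HW^{1,p}_V(\mathcal{H}_n)$, using conditions $(2)$--$(3)$ of \eqref{33} and the Ambrosetti--Rabinowitz-type inequality $0<\theta F(\xi,t)\leq f(\xi,t)t$ from \eqref{34}; then, passing to a weak limit $u$, one applies the concentration--compactness principle of Lions in the Heisenberg setting (working with the measures in $\mathcal{N}(\mathcal{H}_n)$) to control possible concentration of $|D_H u_n|^p$ and $|u_n|^{p^{*}}$ at points and the loss of mass at infinity. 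The strict bound $c<(\tfrac1\theta-\tfrac1{p^{*}})(m_1C_{p^{*}})^{p^{*}/(p^{*}-p)}$ forces the atomic parts of the limiting measures to vanish, whence $u_n\to u$ strongly. The degeneracy $M(0)=0$ must be treated with care at exactly this point: one has to exclude the scenario in which the weak limit is $0$ while $\|u_n\|$ stays bounded away from $0$, and it is precisely condition $(3)$, $M(t)\geq m_1 t^{\tau-1}$, that restores the coercivity needed to rule this out, so that the limit is a genuine nonzero critical point when $c$ is positive.

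Finally, I would reproduce the minimax construction of the proof of Theorem~\eqref{thm7}: define $c_n^{\lambda}=\inf_{E\in\Gamma_n}\max_{u\in E}\mathcal{J}_\lambda(u)$, bound it above by the monotone sequence $\Phi_n$ obtained by discarding the subcritical terms, and then use the two alternatives of the hypothesis. In Case~$(1)$ one chooses $m_*>0$ so that for every $m_1>m_*$ we have $\sup_n\Phi_n<(\tfrac1\theta-\tfrac1{p^{*}})(m_1C_{p^{*}})^{p^{*}/(p^{*}-p)}$; in Case~$(2)$ one argues as in the proof of Theorem~\eqref{thm10} to produce $\lambda_4>0$ with $c_n^{\lambda}\leq\Phi_n<$ threshold for all $\lambda>\lambda_4$. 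Either way $0<c_1^{\lambda}\leq c_2^{\lambda}\leq\cdots\leq c_n^{\lambda}<$ threshold, so by Theorem~\eqref{thm9} each $c_j^{\lambda}$ is a critical value of $\mathcal{J}_\lambda$; if two of them coincide, the genus estimate $\gamma(K_{c_j^\lambda})\geq 2$ gives infinitely many critical points (invoking \cite{52}), and otherwise one gets $n$ distinct pairs $\pm u_j$ of nontrivial weak solutions of \eqref{30}, which completes the proof.
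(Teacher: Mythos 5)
Your proposal follows essentially the same route as the paper, which disposes of this theorem in one line by declaring it ``similar to Theorem \eqref{thm7}'': the genus-theoretic minimax of Theorem \eqref{thm9} applied to the even functional $\mathcal{J}_{\lambda}$, the truncated $(PS)_{c}$ condition (the paper's Lemma \eqref{40}, whose proof it defers to \cite{51}), and the two alternatives on $m_{1}$ versus $\lambda$; your write-up simply supplies the details the paper omits. The only discrepancy is cosmetic: in the degenerate case the critical threshold should carry the exponents coming from condition $(3)$ of \eqref{33}, namely $\left(\frac{1}{\theta}-\frac{1}{p^{*}}\right)\left(m_{1}C^{\tau}_{p^{*}}\right)^{\frac{p^{*}}{p^{*}-p\tau}}$ as in the proof of Theorem \eqref{thm10}, rather than the non-degenerate exponents $\frac{p^{*}}{p^{*}-p}$ you wrote (a slip the paper itself also makes in Lemma \eqref{40}).
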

	Before indulging ourselves into the proofs of these statements above, we must state the following lemma which apparently plays a major role in determining the existence of solution to problem \eqref{30}.
	\begin{lemma}\label{40}
		Under the assumptions on $M$ and $f$ mentioned above, if suppose, $\{u_{n}\}_{n}\subset HW^{1,p}_{V}(\mathcal{H}_{n})$ be a Palais-Smale Sequence of functional $\mathcal{J}_{\lambda}$ ( as defined in \eqref{36} ). \par 
		Furthermore, if, 
		\begin{align*}
			0<c_{\lambda}<\left(\frac{1}{\theta}-\frac{1}{p^{*}}\right)\left(m_{0}C_{p^{*}}\right)^{\frac{p^{*}}{p^{*}-p}}
		\end{align*}
		Then, $\exists$ a subsequence of $\{u_{n}\}_{n}$ \textbf{Strongly Convergent} in $HW^{1,p}_{V}(\mathcal{H}_{n})$.
	\end{lemma}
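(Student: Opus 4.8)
The plan is to verify the $(PS)_{c_\lambda}$ property for $\mathcal{J}_\lambda$ in four movements: boundedness of the sequence, extraction of a weak limit together with a concentration--compactness decomposition of the critical term, exclusion of concentration by means of the energy threshold, and a final $(S_+)$-type argument converting weak into strong convergence.

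First I would show $\{u_n\}_n$ is bounded in $HW^{1,p}_{V}(\mathcal{H}_{n})$. Writing $\|u_n\|^{p}=\|D_{H}(u_n)\|_{p}^{p}+\|u_n\|_{p,V}^{p}$ and using \eqref{35} together with the expression for $\langle\mathcal{J}'_{\lambda}(u_n),u_n\rangle$, I form the combination $\mathcal{J}_{\lambda}(u_n)-\tfrac{1}{\theta}\langle\mathcal{J}'_{\lambda}(u_n),u_n\rangle$. Condition $(2)$ of \eqref{33} gives $\tfrac{1}{p}\mathcal{M}(\|u_n\|^{p})-\tfrac{1}{\theta}M(\|u_n\|^{p})\|u_n\|^{p}\ge\left(\tfrac{1}{p}-\tfrac{\tau}{\theta}\right)\mathcal{M}(\|u_n\|^{p})$, while condition $(3)$ gives $\mathcal{M}(t)\ge\tfrac{m_1}{\tau}t^{\tau}$; since $\theta>p\tau$ the coefficient $\tfrac{1}{p}-\tfrac{\tau}{\theta}$ is positive. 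The nonlinear term contributes $-\lambda\int_{\mathcal{H}_{n}}F(\xi,u_n)+\tfrac{\lambda}{\theta}\int_{\mathcal{H}_{n}}f(\xi,u_n)u_n\ge 0$ by the last hypothesis in \eqref{34}, and the critical term contributes $\left(\tfrac{1}{\theta}-\tfrac{1}{p^{*}}\right)\|u_n\|_{p^{*}}^{p^{*}}\ge 0$ since $\theta<p^{*}$. Hence $c_\lambda+o(1)\|u_n\|\ge c\,\|u_n\|^{p\tau}$ for some $c>0$, and because $p\tau\ge p>1$ this forces boundedness. Up to a subsequence, $u_n\rightharpoonup u$ in $HW^{1,p}_{V}(\mathcal{H}_{n})$, $u_n\to u$ in $L^{s}_{\mathrm{loc}}(\mathcal{H}_{n})$ for $p\le s<p^{*}$ and a.e.\ on $\mathcal{H}_{n}$.

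Next I would invoke the Folland--Stein (Heisenberg) version of Lions' concentration--compactness principle: there exist finite nonnegative Radon measures $\mu,\nu$, an at most countable set $J$, points $\{\xi_j\}_{j\in J}\subset\mathcal{H}_{n}$ and weights $\mu_j,\nu_j>0$ with
\begin{align*}
	|D_{H}(u_n)|^{p}_{H}\rightharpoonup \mu &\ge |D_{H}(u)|^{p}_{H}+\sum_{j\in J}\mu_j\,\delta_{\xi_j},\\
	|u_n|^{p^{*}}\rightharpoonup \nu &=|u|^{p^{*}}+\sum_{j\in J}\nu_j\,\delta_{\xi_j},
\end{align*}
the reverse Sobolev inequality $C_{p^{*}}\,\nu_j^{p/p^{*}}\le\mu_j$, and analogous quantities $\mu_\infty,\nu_\infty$ (mass at infinity) with $C_{p^{*}}\,\nu_\infty^{p/p^{*}}\le\mu_\infty$. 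Fixing $j\in J$, I test $\mathcal{J}'_{\lambda}(u_n)$ against $\psi_{\varepsilon}u_n$, where $\psi_\varepsilon$ is a cut-off equal to $1$ near $\xi_j$ and supported in a Korányi ball of radius $\varepsilon$; letting first $n\to\infty$ (passing $M(\|u_n\|^{p})$ to its limit after extracting a further subsequence) and then $\varepsilon\to 0$, I obtain $M_\infty\,\mu_j\le\nu_j$, where $M_\infty$ is the limit value of $M(\|u_n\|^{p})$. If $\|u_n\|\to 0$ then $u\equiv 0$ and there is nothing to prove, so I may assume $M_\infty>0$ and, using the lower bound on $M$, that $M_\infty\ge m_0$ (respectively the analogue via $M(t)\ge m_1 t^{\tau-1}$ in the degenerate regime). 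Combined with $C_{p^{*}}\nu_j^{p/p^{*}}\le\mu_j$ this gives the dichotomy $\nu_j=0$ or $\nu_j\ge(m_0C_{p^{*}})^{p^{*}/(p^{*}-p)}$. If the latter held for some $j$, then
\begin{align*}
	c_{\lambda}=\lim_{n\to\infty}\left(\mathcal{J}_{\lambda}(u_n)-\tfrac{1}{\theta}\langle\mathcal{J}'_{\lambda}(u_n),u_n\rangle\right)\ge\left(\tfrac{1}{\theta}-\tfrac{1}{p^{*}}\right)\nu_j\ge\left(\tfrac{1}{\theta}-\tfrac{1}{p^{*}}\right)\left(m_0C_{p^{*}}\right)^{\frac{p^{*}}{p^{*}-p}},
\end{align*}
contradicting the hypothesis; the same computation rules out $\nu_\infty>0$. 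Hence $J=\emptyset$, $\nu_\infty=0$, so $\int_{\mathcal{H}_{n}}|u_n|^{p^{*}}\to\int_{\mathcal{H}_{n}}|u|^{p^{*}}$, which with a.e.\ convergence and the Brezis--Lieb lemma yields $u_n\to u$ strongly in $L^{p^{*}}(\mathcal{H}_{n})$.

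Finally I would upgrade to strong convergence in $HW^{1,p}_{V}(\mathcal{H}_{n})$. From $\langle\mathcal{J}'_{\lambda}(u_n),u_n-u\rangle\to 0$, the strong $L^{p^{*}}$ convergence just obtained, and the convergence $\int_{\mathcal{H}_{n}}f(\xi,u_n)(u_n-u)\to 0$ (Hölder with $\tfrac{1}{\eta}+\tfrac{r-1}{p^{*}}+\tfrac{1}{p^{*}}=1$ and $a\in L^{\eta}$ from \eqref{34}), I deduce $M(\|u_n\|^{p})\big(\langle\mathcal{A}_p(u_n)-\mathcal{A}_p(u),u_n-u\rangle+\int_{\mathcal{H}_{n}}V(|u_n|^{p-2}u_n-|u|^{p-2}u)(u_n-u)\big)\to 0$ after subtracting the corresponding terms with $u$, which vanish by weak convergence. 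Since $M(\|u_n\|^{p})$ is bounded away from $0$ (otherwise $u_n\to 0$ trivially), and since both the $p$-sub-Laplacian operator $\mathcal{A}_p$ and $t\mapsto|t|^{p-2}t$ are monotone and enjoy the $(S_+)$ property on $HW^{1,p}_{V}(\mathcal{H}_{n})$ by uniform convexity of $L^{p}$, it follows that $u_n\to u$ strongly, completing the proof.

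The main obstacle is the third step: making the local concentration estimate $M_\infty\mu_j\le\nu_j$ rigorous in the \emph{degenerate} regime, where $M(0)=0$ precludes a uniform lower bound on $M(\|u_n\|^{p})$ near a concentration point, so one must instead exploit the homogeneity bound $M(t)\ge m_1 t^{\tau-1}$ of condition $(3)$ together with a careful dichotomy according to whether $\|u_n\|\to 0$; setting up the Folland--Stein concentration--compactness principle with the sharp constant $C_{p^{*}}$ in exactly the form required also demands some care, although it is available in the literature on $p$-sub-Laplacians on $\mathcal{H}_{n}$.
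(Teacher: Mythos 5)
The paper never actually proves this lemma — it only refers the reader to \cite{51} — and your outline is essentially the argument given there: boundedness of the Palais--Smale sequence via the combination $\mathcal{J}_\lambda(u_n)-\tfrac{1}{\theta}\langle\mathcal{J}'_\lambda(u_n),u_n\rangle$ together with conditions $(2)$--$(3)$, the concentration--compactness principle on $\mathcal{H}_n$ with the Folland--Stein constant $C_{p^*}$, exclusion of the atoms $\nu_j$ and of the mass at infinity by the level restriction, and an $(S_+)$/monotonicity argument upgrading weak to strong convergence, so your proposal is correct and follows the same route as the paper's source. The one caveat is the obstacle you yourself flag: in the degenerate regime $M(0)=0$, the bound $M_\infty\ge m_0$ is unavailable, and the correct threshold must be expressed through $m_1$, $\tau$ and the exponent $\tfrac{p^*}{p^*-p\tau}$ (as the paper does, somewhat inconsistently, in the proof of Theorem \eqref{thm10}) rather than the $\left(m_0C_{p^*}\right)^{\frac{p^*}{p^*-p}}$ appearing in the statement, so your sketch as written is complete for the non-degenerate case and needs exactly the modification you indicate for the degenerate one.
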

	For the proof of the Lemma, interested readers can look up \cite{51}.\par 
	\begin{rmk}
		In fact, one can further infer that, the functional $\mathcal{J}_{\lambda}$ satisfies all the assumptions of the \textbf{Mountain Pass Theorem} ( Theorem \eqref{thm8} ).
	\end{rmk}
	\subsubsection{Proof of Theorem \eqref{thm10}}
	\begin{proof}
		A priori using similar argments as in the proof of the \textit{Mountain Pass Theorem} (Theorem \eqref{thm8}), we get,
		\begin{align*}
			0<c_{\lambda}=\inf\limits_{\gamma\in \Gamma}\max\limits_{0\leq t\leq 1}\mathcal{J}_{\lambda}(\gamma(t))<\left(\frac{1}{\theta}-\frac{1}{p^{*}}\right)\left(m_{0}C^{\tau}_{p^{*}}\right)^{\frac{p^{*}}{p^{*}-p\tau}}
		\end{align*}
		The rest of the proof follows in a similar manner to that described in Theorem \eqref{thm6}.
	\end{proof}
	\subsubsection{Proof of Theorem \eqref{thm11}}
	\begin{proof}
		The proof of the theorem is similar to that of Theorem \eqref{thm7}.
	\end{proof}
	
	\vspace{40pt}
	\section*{Statements and Declarations}
	\subsection*{Conflicts of Interest Statement}
	I as the sole author of this article certify that I have no affiliations with or involvement in any
	organization or entity with any financial interest (such as honoraria; educational grants; participation in speakers’ bureaus;
	membership, employment, consultancies, stock ownership, or other equity interest; and expert testimony or patent-licensing
	arrangements), or non-financial interest (such as personal or professional relationships, affi liations, knowledge or beliefs) in
	the subject matter or materials discussed in this manuscript.\\
	\subsection*{Data Availability Statement}
	I as the sole author of this aarticle confirm that the data supporting the findings of this study are available within the article [and/or] its supplementary materials.\\\\

\end{document}